\documentclass[11pt
]{elsarticle}
\usepackage{float}
\usepackage{amssymb}
\usepackage{amsmath}
\usepackage{amscd}
\usepackage{amsfonts}
\usepackage{amsthm}
\usepackage{graphicx}
\usepackage{mathtools}
\usepackage{color}
\usepackage[colorlinks, linkcolor=red, citecolor=green,filecolor=magenta,urlcolor=cyan]{hyperref}

\newfont{\Blackboard}{msbm10 scaled 1200}

\newfont{\roma}{cmr10 scaled 1200}

\newtheorem{theorem}{{}\hskip\parindent Theorem}[section]
\newtheorem{lemma}{{}\hskip\parindent Lemma}[section]

\newtheorem{example}{{}\hskip\parindent Example}[section]
\newtheorem{corollary}{{}\hskip\parindent Corollary}[section]
\newtheorem{definition}{{}\hskip\parindent Definition}[section]
\newtheorem{remark}{{}\hskip\parindent Remark}[section]

\numberwithin{equation}{section}

\def\be{\begin{equation}}
	\def\ee{\end{equation}}
\def\beq{\arraycolsep=1.5pt\begin{eqnarray}}
	\def\eeq{\end{eqnarray}}

\begin{document}
	
	\begin{frontmatter}
	
	\title{ Regional Input-to-State Stability for Systems on Riemannian Manifolds
	}

\date{}
\author[SWJTU]{Li Deng\corref{cor}}
\ead{dengli@swjtu.edu.cn}
\address[SWJTU]{School of Mathematics, Southwest Jiaotong University, Chengdu 611756, Sichuan Province, China}

\author[SWJTU]{Yuan Wang}
\ead{3235642106@qq.com}

\cortext[cor]{Corresponding author.\\
This work was supported by the National Natural Science Foundation of China (Grant No.~12371451) and the Natural Science Foundation of Sichuan Province (Grant No.~2025ZNSFSC0077).
 }

\begin{abstract}
In this paper, we study regional input-to-state stability (regional ISS) for time-varying control systems evolving on connected and complete Riemannian manifolds with a prescribed open state domain.
Using the Riemannian distance, we formulate the notion of a region of ISS and introduce an ISS-Lyapunov function.
We prove that the existence of such a function implies regional ISS in an explicitly estimated geodesic ball and yields a corresponding admissible input bound.
Under additional geometric and dynamical assumptions, this region can be enlarged to the largest geodesic ball centered at the equilibrium point and contained in the prescribed state domain.
Examples under the Euclidean and hyperbolic metrics show that different Riemannian metrics may lead to different estimates for the same system.

	\end{abstract}
	
\begin{keyword}
	regional input-to-state stability, 	region of ISS, ISS-Lyapunov function, Riemannian manifold.

	\MSC[2020] 93D09 \sep 93D20\sep 93D30 \sep 93C10 \sep 53C20
		
\end{keyword}
	
	\end{frontmatter}

\section{Introduction}
Input-to-state stability (ISS) is a fundamental robustness notion for nonlinear systems subject to external inputs or disturbances. In this paper, we study the following problem for time-varying control systems on Riemannian manifolds: given a prescribed open state domain containing an equilibrium point, determine a set of initial states and a bound on the input magnitude such that every corresponding solution remains in the prescribed state domain and satisfies a uniform ISS estimate.

To formulate this problem precisely, let $m,n\in\mathbb N^+$, and let $(M,g)$ be a connected and complete
$n$-dimensional Riemannian manifold; see \cite[p.~11]{lee8}.
We denote by $TM$ the tangent bundle of $M$, and by $T_xM$ the tangent
space of $M$ at $x\in M$.
Let $\rho(\cdot,\cdot)$ be the distance function on $M$ induced by the
Riemannian metric $g$; see \cite[Definition~2.4, p.~146]{c}. Throughout the paper, $|\cdot|$ denotes the Euclidean norm when its
argument belongs to a Euclidean space and the norm induced by $g$ when
its argument is a tangent vector. In particular, for $x\in M$ and $X\in T_xM$,
\[
    |X|=\sqrt{g_x(X,X)}.
\]

Let $L^\infty([0,+\infty);\mathbb R^m)$ denote the space of all
measurable and essentially bounded functions from $[0,+\infty)$ to
$\mathbb R^m$, equipped with the norm
\[
\|u\|_\infty:=\operatorname*{ess\,sup}_{t\geq 0}|u(t)|,
\]
where $|\cdot|$ denotes the Euclidean norm on $\mathbb R^m$.

Let \(D\subseteq M\) be an open set and let $f:[0,+\infty)\times D\times\mathbb R^m\to TM$ be a vector field depending on time and input, that is,
\(f(t,x,u)\in T_xM\) for every \((t,x,u)\). Consider
the following system
\begin{align}\label{sm}
    \dot x(t)=f(t,x(t),u(t)),\quad \text{for a.e. }t\geq t_0,
\end{align}
where $u(\cdot)\in L^\infty([0,+\infty);\mathbb R^m)$ and $t_0\in[0,+\infty)$ are referred to as the \emph{input} and the \emph{initial time}, respectively.
A solution of \eqref{sm} is understood as a locally absolutely continuous curve in $D$ that satisfies \eqref{sm} for a.e. $t$.

Here \(M\) is the ambient state manifold, whereas the open set
\(D\subseteq M\) is a prescribed state domain that forms part of the
data of the problem. The set \(D\) need not be the maximal domain to
which the vector field \(f\) can be extended. It may be the natural
domain on which the model is defined or valid, or it may encode
operational, safety, or feasibility constraints. In either case, only
trajectories remaining in \(D\) are regarded as admissible.

A point $x^*\in D$ is called an \emph{equilibrium point} of system~\eqref{sm} if it satisfies
\begin{align}\label{ep}
    f(t,x^*,0)=0,\qquad \forall\,t\in[0,+\infty).
\end{align}
This leads to the following definition.
\begin{definition}[Regional Input-to-State Stability] 
System \eqref{sm} is called \emph{regionally input-to-state stable (ISS)} in a set $D_1\subseteq D$ containing $x^*$ as an interior point if there exist a constant $\delta>0$, a $\mathcal{KL}$-function $\beta$ whose first-variable domain contains \(\{\rho(x,x^*)\mid x\in D_1\}\), and a $\mathcal K$-function $\gamma:[0,\delta)\to[0,+\infty)$ (see Definition~\ref{kkl1} in the Appendix) such that, for every $t_0\in[0,+\infty)$, every initial condition $x_u(t_0)\in D_1$, and every input $u(\cdot)\in L^\infty([0,+\infty);\mathbb R^m)$ with  
\begin{align}\label{uitz}
\|u\|_{\infty,t_0}
:=
\operatorname*{ess\,sup}_{\tau\geq t_0}|u(\tau)|<\delta,
\end{align}
 every corresponding maximal solution $x_u(\cdot)$ of \eqref{sm}, that is, a
 solution that cannot be extended to a larger time interval as a solution
 taking values in \(D\) (see Definition~\ref{def:maximal-solution}), is defined
 on $[t_0,+\infty)$ and satisfies
\begin{align}\label{iss-ineq}
\rho(x_u(t),x^*)\leq \beta\bigl(\rho(x_u(t_0),x^*),\,t-t_0\bigr) + \gamma\bigl(\|u\|_{\infty,t_0}\bigr), \qquad \forall\,t\geq t_0.
\end{align}
The set \(D_1\) is called a \emph{region of ISS}, and \(\delta\)
is called an \emph{admissible input radius}.
\end{definition}

\begin{remark}\label{rdriss}
The preceding notion is regional with respect to the initial state
and local with respect to the input. More precisely, the initial state
is allowed to range over the prescribed set \(D_1\), whereas the ISS
estimate is required only for inputs satisfying
\(\|u\|_{\infty,t_0}<\delta\).
For the prescribed set \(D_1\), the same functions \(\beta\) and
\(\gamma\) must work for every initial time \(t_0\), every initial
state \(x_u(t_0)\in D_1\), and every admissible input. Thus, the
estimate is uniform with respect to the initial time and the initial
state in \(D_1\), although \(\beta\) and \(\gamma\) may depend on
\(D_1\).
If \(D_1=M\) and the estimate holds for every
\(u(\cdot)\in L^\infty([0,+\infty);\mathbb R^m)\), then it reduces to the
usual notion of global ISS. If \(D_1\) is a sufficiently small
neighborhood of \(x^*\), it corresponds to the usual local ISS property.
\end{remark}

Our objective is to establish sufficient conditions for the preceding regional
ISS property by using ISS-Lyapunov functions, and to estimate both a region of
ISS and a corresponding admissible input radius.

ISS was introduced by Sontag in \cite{MR987806} to describe the robustness of
nonlinear systems with respect to external inputs. Unlike stability properties
for systems without inputs, ISS quantifies in a unified estimate both the decay
of the effect of the initial state and the influence of the input. In
particular, it gives asymptotic stability when the input is zero and bounds the
state in terms of the input magnitude when the input is nonzero. Fundamental
characterizations of ISS,
including characterizations in terms of ISS-Lyapunov functions, were
established in \cite{MR1325675,MR1409473}. These results provide the basic
Lyapunov framework for studying the robustness of nonlinear systems.

There are two main reasons to consider systems with prescribed proper open
state domains. First, ISS on a proper open subset can arise as a subproblem when
studying another problem. For example, Sontag studied ISS of perturbed
gradient flows motivated by model-free feedback control learning and reviewed
ISS for systems evolving on proper open subsets of Euclidean spaces
\cite{MR4374943}. Cui, Jiang and Sontag further studied small-disturbance ISS
of perturbed gradient flows and applied it to policy optimization for the
linear-quadratic regulator problem, where the admissible stabilizing feedback
gains form a proper open subset of a Euclidean space \cite{MR4740203}.

Second, in many applications the state is naturally required to remain in a
prescribed proper open domain. The domain may be determined by the validity of
the model or by operational, safety, or feasibility constraints. For DC
microgrids with ideal constant-power loads, the ISS problem is naturally posed
on a positive-voltage region excluding the singular zero-voltage boundary
\cite{FERGUSON2023110883}. Similarly, biochemical reaction networks and
chemostat models evolve in positive orthants or in the relative interiors of
stoichiometric compatibility classes, and ISS can be used to quantify their
robustness to time-varying reaction or dilution rates
\cite{doi:10.1137/S0363012903437964,MR2293767}. Prescribed regions also encode
operational constraints in power systems, such as bounds on angle differences
and frequency deviations \cite{8386649}. Analogous state restrictions arise
in robotic curve tracking and controlled microelectromechanical relays
\cite{6068226,https://doi.org/10.1002/rnc.1297}. These examples show that the
prescribed state domain is part of the problem itself, rather than a
neighborhood selected only for stability analysis.

When the state domain is a prescribed proper open subset, global ISS on the
ambient state space is generally not the appropriate property, since only
solutions remaining in the prescribed domain are admissible. On the other
hand, a purely local ISS conclusion may provide information only in a small
neighborhood of the equilibrium and need not identify a useful part of the
prescribed domain. This leads naturally to regional ISS. A region of ISS can be regarded as a robust counterpart of the classical region of attraction. For every admissible initial state in this region, the state satisfies a uniform ISS estimate: the contribution of the initial state decays with time, whereas the contribution of the input is bounded in terms of its magnitude. It is
therefore also important to identify an admissible input bound under which the
corresponding solutions remain in the prescribed state domain.

There are several reasons why it is natural to study regional ISS for systems
evolving on Riemannian manifolds.
\begin{enumerate}
\item Many control systems arising from mechanics, robotics, and attitude
dynamics evolve naturally on nonlinear manifolds rather than on Euclidean
spaces. For instance, the configuration space of a mechanical system is
typically a smooth manifold, and the corresponding state space is often its
tangent bundle. Moreover, in mechanical systems, the kinetic energy naturally
induces a Riemannian metric on the configuration manifold; see, for instance,
\cite{jix3}. Thus, the Riemannian distance associated with this metric provides
a natural way to measure the separation between configurations.
\item In the classical ISS theory on Euclidean spaces, the distance from the
state to the equilibrium is measured by the Euclidean norm. On a general
manifold, however, the Euclidean norm is no longer an intrinsic object. The
Riemannian distance provides a natural replacement and leads to a
coordinate-free formulation of regional ISS in terms of the intrinsic
geometry of the state space.
\item Control systems evolving on manifolds are also subject to disturbances,
modeling errors, actuator errors, and measurement noise. Regional ISS provides
a way to quantify their effects while requiring the corresponding solutions
to remain in the prescribed state domain.
\item Stability theory on manifolds is affected by the topology of the state
space. For example, topological obstructions may prevent the existence of a
continuous vector field with a globally asymptotically stable equilibrium; see
\cite{obs}. Thus, regional ISS on manifolds is not merely a
coordinate-dependent reformulation of the Euclidean theory. It requires an
intrinsic treatment that takes into account the differential, metric, and
topological structure of the underlying manifold.
\end{enumerate}

We next review results related to regional ISS, beginning with systems
evolving in Euclidean spaces. Magni, Raimondo and Scattolini established
regional ISS for constrained discrete-time nonlinear systems by using a
nonsmooth ISS-Lyapunov function and applied the result to nonlinear model
predictive control \cite{MR2260135}; related ideas appear in robust model
predictive control \cite{limon2009iss}. For large-scale interconnected
systems, Dashkovskiy and R{\"u}ffer studied local ISS and obtained estimates
for stability regions \cite{MR2642263}. Regional ISS has also been used for
discrete-time fuzzy systems with state delays and saturating actuators, where
the region of attraction and the admissible disturbance energy are estimated
\cite{SilvaEtAl2021}, and for nonlinear systems controlled over communication
networks, where regional ISS is preserved in the presence of time-varying
delays and packet dropouts \cite{PinEtAl2021}. More recently, regional ISS was
used to estimate the domain of attraction and the ultimate bounded region for
finite-control-set model predictive control systems \cite{HuChenWang2025}.

For time-varying nonlinear systems in Euclidean spaces, ISS and corresponding
Lyapunov conditions have been studied in
\cite{EdwardsLinWang2000,MalisoffMazenc2005} and
\cite[Theorem~4.19, p.~176]{jind}. Conditions based on an indefinite upper
bound for the derivative of a Lyapunov function were obtained in
\cite{SahanOzdemir2024}. More recently, Haimovich et al. showed for
time-varying systems on $\mathbb R^n$ that an ISS-Lyapunov function implies
ISS for every maximal solution, even when solutions are nonunique; forward
completeness follows from boundedness and the
boundedness-implies-continuation property
\cite[Proposition~4]{HaimovichLiuRussoMancilla2025}.

For systems evolving on manifolds, Angeli introduced the notion of almost
global ISS, where global stability may be prevented by topological
obstructions \cite{MR2063641}. Input-to-state stabilization on smooth
manifolds by means of sample-and-hold solutions was studied by Malisoff,
Krichman and Sontag \cite{MalisoffKrichmanSontag2006}. Angeli and Efimov
developed ISS-type Lyapunov characterizations for autonomous systems on
Riemannian manifolds with respect to a compact invariant set admitting a
finite decomposition without cycles \cite{AngeliEfimov2015}. Forni and Angeli
studied perturbation and singular perturbation results for
input-to-state multistable systems on Riemannian manifolds
\cite{ForniAngeli2019}. More recently, Mendoza-Avila et al. studied the design
of controllers ensuring ISS or integral ISS for multistable state-periodic
systems and also considered the case where the maximal invariant set of the
closed-loop system is a compact set on a manifold
\cite{MendozaAvilaEtAl2026}. The framework of Angeli and Efimov concerns
global robustness with respect to a possibly multistable invariant set, for
which a classical $\mathcal{KL}$-ISS estimate need not hold. More generally,
the above works concern almost-global stability, multistability, or feedback
stabilization, rather than regional ISS for general time-varying systems on a
prescribed proper open state domain. In contrast, the problem considered here
concerns a single equilibrium and requires a uniform ISS estimate on an
explicitly identified initial-state region together with an admissible input
bound.

In this paper, we formulate regional ISS for system~\eqref{sm} on the
prescribed state domain $D$ and introduce an ISS-Lyapunov function by using the
Riemannian distance. We prove that the existence of an ISS-Lyapunov function
implies regional ISS in an explicitly estimated geodesic ball and also gives a
corresponding admissible input bound; see Theorem~\ref{isslM}. The conclusion
applies to every corresponding maximal solution and does not rely on a
single-valued transition map. Both the initial-state region and the admissible
input bound are determined from the prescribed state domain, the Riemannian
distance and the comparison functions in the Lyapunov conditions. Under
additional geometric and dynamical assumptions, we further show that the
estimated region can be enlarged to the largest geodesic ball centered at the
equilibrium point and contained in $D$; see Theorem~\ref{isse31}. In the
autonomous Euclidean case, Corollary~\ref{global-ISS} recovers the sufficient
direction of the classical ISS-Lyapunov characterization; see
Remark~\ref{rem:euclidean-iss}. The examples also show that, for the same system
and prescribed state domain, different Riemannian metrics may lead to different
estimates of the region of ISS.

As a technical ingredient needed in the proof of our ISS results, we
also establish an intrinsic Carath\'eodory existence and continuation
theorem for time-dependent tangent vector fields on complete Riemannian
manifolds; see Theorem~\ref{cyfx1}. This result is a manifold counterpart
of the classical Euclidean results of Hale
\cite[Theorems~5.1 and~5.2, pp.~28--29]{MR587488}.
It requires only measurability in time, continuity in the state
variable, and local integrable boundedness. In particular, no local
Lipschitz condition, and hence no uniqueness assumption, is imposed.
The resulting continuation alternative is used in the proof of
Theorem~\ref{isslM} to show that a solution whose image remains in a
compact subset of the state domain cannot cease to exist in finite time.

The remainder of this paper is organized as follows. In Section~\ref{smr}, we
present the main results. In Section~\ref{sa5}, we illustrate the results by
examples. In Section~\ref{pmr4}, we prove the main results. In
Appendix~\ref{aur9}, we establish the auxiliary geometric and
Carath\'eodory-ODE results used in the proofs.

\section{Main Results}\label{smr}


Before presenting the main results, we first recall locally absolutely
continuous curves and pointwise velocity vectors, and then state the standing
assumptions and define solutions and maximal solutions of the system.

Motivated by the metric characterization of absolutely continuous
curves in \cite[Definition~1.1.1, p.~23]{MR2129498}, we use the
following local version.

\begin{definition}\label{jDlx}
Let \(c_1,c_2\in \mathbb R\cup\{-\infty,+\infty\}\) with \(c_1<c_2\).
A map \(x:(c_1,c_2)\to M\) is called \emph{locally absolutely continuous}
if, for every compact interval \([t_1,t_2]\subset (c_1,c_2)\), there exists
a function \(m_{t_1,t_2}\in L^1(t_1,t_2)\) such that
\[
    \rho(x(s),x(t))
    \leq \int_s^t m_{t_1,t_2}(\tau)\,d\tau,
    \qquad
    t_1\leq s\leq t\leq t_2.
\]
A map \(x:[a,b]\to M\) is called \emph{absolutely continuous} if there
exists a function \(m\in L^1(a,b)\) such that
\[
    \rho(x(s),x(t))
    \leq \int_s^t m(\tau)\,d\tau,
    \qquad a\leq s\leq t\leq b.
\]
A curve on a half-open interval is called locally absolutely continuous if
its restriction to every compact subinterval is absolutely continuous.
\end{definition}

For a \(C^1\) map \(h:P\to N\) between smooth manifolds, we denote its
differential at \(x\in P\) by
$
dh_x:T_xP\to T_{h(x)}N$; see
\cite[Definition~8.23, p.~259]{gallier-quaintance}.

We shall also use the pointwise velocity vector of a curve on a smooth
manifold. For a \(C^1\) curve \(\gamma:J\to M\), its velocity vector at
\(t_0\in J\) is defined by
\begin{align}\label{dgtzs}
    \dot\gamma(t_0)
    :=d\gamma_{t_0}\left(\left.\frac{d}{dt}\right|_{t_0}\right)
    \in T_{\gamma(t_0)}M;
\end{align}
see \cite[Definition~8.24, p.~261]{gallier-quaintance}. In local coordinates
\(\varphi=(x^1,\ldots,x^n):U\to\mathbb R^n\), with
\(\varphi\circ\gamma=(\gamma^1,\ldots,\gamma^n)\), it is expressed as
\begin{align}\label{ledgtz}
     \dot\gamma(t_0)
    =
    \sum_{i=1}^n
    \frac{d\gamma^i}{dt}(t_0)
    \left.\frac{\partial}{\partial x^i}\right|_{\gamma(t_0)}.
\end{align}
This coordinate expression motivates the following extension to a curve whose local coordinate representation is differentiable only at the time under consideration.

\begin{lemma}\label{lem:pointwise-chain-rule}
Let \(I\subset\mathbb R\) be an interval, let \(t_0\) be an interior point of
\(I\), and let \(\gamma:I\to M\) be a curve. Suppose that there exist a coordinate
chart \(\varphi:U\to\mathbb R^n\) about \(x=\gamma(t_0)\) and a neighbourhood
\(J\subset I\) of \(t_0\) such that \(\gamma(J)\subset U\) and
\(\varphi\circ\gamma\) is differentiable at \(t_0\). Using the canonical
identification \(T_{\varphi(x)}\mathbb R^n\simeq\mathbb R^n\), define
\begin{align}\label{pointwise-velocity}
 \dot\gamma(t_0):=
 d(\varphi^{-1})_{\varphi(x)}
 \big((\varphi\circ\gamma)'(t_0)\big)\in T_xM.
\end{align}
Then \(\dot\gamma(t_0)\) is independent of the chosen coordinate chart. We call
\(\dot\gamma(t_0)\) the \emph{velocity vector} of \(\gamma\) at \(t_0\). If
\(\gamma\) is of class \(C^1\), this definition agrees with the usual
definition of its velocity vector. Moreover, let \(V\subset M\) be any open
neighbourhood of \(x\), and let \(h\in C^1(V)\). Then \(h\circ\gamma\) is
defined in a neighbourhood of \(t_0\), is differentiable at \(t_0\), and
\begin{align}\label{pointwise-chain-rule}
 \left.\frac{d}{dt}\right|_{t=t_0}h(\gamma(t))
 =dh_x\big(\dot\gamma(t_0)\big).
\end{align}
\end{lemma}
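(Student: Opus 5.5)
The proof reduces every claim to the ordinary chain rule in \(\mathbb R^n\) for a function differentiable at a single point, composed with a \(C^1\) map. There are three steps: chart independence, agreement with the \(C^1\) definition, and the pointwise chain rule.

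\textbf{Chart independence.} Let \(\psi:W\to\mathbb R^n\) be a second chart about \(x\), with a neighbourhood \(J'\) of \(t_0\) such that \(\gamma(J')\subset W\).

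\emph{Differentiability transfers between charts.} On the neighbourhood \(J\cap J'\) of \(t_0\) we have
\[
\psi\circ\gamma=(\psi\circ\varphi^{-1})\circ(\varphi\circ\gamma).
\]
The transition map \(\psi\circ\varphi^{-1}\) is smooth on the open set \(\varphi(U\cap W)\). Since \(\varphi\circ\gamma\) is differentiable at \(t_0\), the Euclidean chain rule (outer map \(C^1\), inner map differentiable at a point) shows that \(\psi\circ\gamma\) is differentiable at \(t_0\), with
\[
(\psi\circ\gamma)'(t_0)=d(\psi\circ\varphi^{-1})_{\varphi(x)}\big((\varphi\circ\gamma)'(t_0)\big).
\]
So the hypothesis does not depend on the chart either.

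\emph{The two definitions agree.} Apply \(d(\psi^{-1})_{\psi(x)}\) to the identity above. The chain rule on manifolds gives
\[
d(\psi^{-1})_{\psi(x)}\circ d(\psi\circ\varphi^{-1})_{\varphi(x)}=d(\varphi^{-1})_{\varphi(x)}.
\]
Hence both charts produce the same vector in \(T_xM\).

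\textbf{Agreement with the \(C^1\) definition.} When \(\gamma\) is \(C^1\), the chain rule for smooth maps gives
\[
d(\varphi\circ\gamma)_{t_0}\big(\tfrac{d}{dt}\big|_{t_0}\big)=d\varphi_x\big(\dot\gamma(t_0)\big),
\]
and the left-hand side is \((\varphi\circ\gamma)'(t_0)\) under the canonical identification. Applying \(d(\varphi^{-1})_{\varphi(x)}=(d\varphi_x)^{-1}\) recovers \eqref{dgtzs}. Equivalently, this is just the coordinate formula \eqref{ledgtz}.

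\textbf{Chain rule.} Shrink \(J\) so that \(\gamma(J)\subset U\cap V\). This is possible because \(\varphi\circ\gamma\) is differentiable, hence continuous, at \(t_0\), so \(\gamma\) is continuous at \(t_0\) and \(\gamma^{-1}(U\cap V)\) contains a neighbourhood of \(t_0\). Thus \(h\circ\gamma\) is defined near \(t_0\).

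On that neighbourhood write \(h\circ\gamma=(h\circ\varphi^{-1})\circ(\varphi\circ\gamma)\). The map \(h\circ\varphi^{-1}\) is \(C^1\) on the open set \(\varphi(U\cap V)\), so the Euclidean pointwise chain rule gives
\[
(h\circ\gamma)'(t_0)=d(h\circ\varphi^{-1})_{\varphi(x)}\big((\varphi\circ\gamma)'(t_0)\big)=dh_x\Big(d(\varphi^{-1})_{\varphi(x)}\big((\varphi\circ\gamma)'(t_0)\big)\Big)=dh_x\big(\dot\gamma(t_0)\big).
\]

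\textbf{Main obstacle.} The only delicate point is the Euclidean chain rule when the inner map is differentiable only at \(t_0\) and the outer map is \(C^1\) (in fact, Fréchet differentiable at the image point suffices). The standard \(o(\cdot)\) argument handles it: write
\[
F(y+k)-F(y)=DF(y)k+o(|k|),
\]
substitute \(k=(\varphi\circ\gamma)(t)-(\varphi\circ\gamma)(t_0)\), and use \(|k|=O(|t-t_0|)\) to conclude. Everything else is bookkeeping of domains.
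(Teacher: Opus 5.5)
Your proof is correct and takes essentially the same route as the paper's. Chart independence goes through the transition map $\psi\circ\varphi^{-1}$ and the pointwise Euclidean chain rule, agreement with the $C^1$ definition uses the manifold chain rule together with $d(\varphi^{-1})_{\varphi(x)}=(d\varphi_x)^{-1}$, and the formula for $h$ comes from the factorization $h\circ\gamma=(h\circ\varphi^{-1})\circ(\varphi\circ\gamma)$ after shrinking to $U\cap V$. You also make explicit two points the paper only asserts: that differentiability of $\psi\circ\gamma$ at $t_0$ follows from that of $\varphi\circ\gamma$, and the $o(|k|)$ argument for the one-point chain rule.
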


The proof of Lemma~\ref{lem:pointwise-chain-rule} is given in the Appendix.

\begin{description}
    \item[\textup{(A)}] Assume that the map  $f:[0,+\infty)\times D\times\mathbb R^m\to TM$ satisfies
\begin{itemize}
    \item for each fixed $(x,u)\in D\times\mathbb R^m$, the map $f(\cdot,x,u)$ is measurable;
    \item for each fixed $(t,u)\in[0,+\infty)\times\mathbb R^m$ and each
    coordinate chart \(\varphi:U\to\mathbb R^n\) in \(D\), the map
    \[
    z\longmapsto
    d\varphi_{\varphi^{-1}(z)}
    \bigl(f(t,\varphi^{-1}(z),u)\bigr)
    \]
    is locally Lipschitz continuous on \(\varphi(U)\);
    \item for each fixed $(t,x)\in[0,+\infty)\times D$, the map $f(t,x,\cdot)$ is continuous.
\end{itemize}
    Moreover, for every $R,r>0$, there exists a locally integrable function
    $I_{R,r}\in L^1_{\mathrm{loc}}([0,+\infty))$ such that
    \[
        |f(t,x,u)|\leq I_{R,r}(t)
    \]
    for a.e. $t\in[0,+\infty)$ and for all $(x,u)\in D\times\mathbb R^m$ with
    \[
        \rho(x,x^*)\leq R,\qquad |u|\leq r,
    \]
    where $x^*\in D$ is an equilibrium point of system~\eqref{sm} and $|\cdot|$ denotes the Euclidean norm in $\mathbb{R}^m$.
    Here $|f(t,x,u)|$ denotes the norm of the tangent vector $f(t,x,u)\in T_xM$ induced by $g$.
\end{description}

Next, we define solutions and maximal solutions of system~\eqref{sm}.

\begin{definition}\label{sssm}
Suppose that Assumption~\textup{(A)} holds. Let
$u(\cdot)\in L^\infty([0,+\infty);\mathbb R^m)$ and
$t_0\in[0,+\infty)$. For $T\in(t_0,+\infty]$, a locally absolutely
continuous curve $x:[t_0,T)\to D$ is called a \emph{solution} to
system~\eqref{sm} on $[t_0,T)$ if
\[
\dot{x}(t)=f(t,x(t),u(t)),
\qquad \text{for a.e. }t\in(t_0,T).
\]
\end{definition}

\begin{definition}\label{def:maximal-solution}
A solution $x:[t_0,T)\to D$ of system~\eqref{sm} is called a
\emph{maximal solution} if there do not exist
$\widetilde T\in(t_0,+\infty]$ with $\widetilde T>T$ and a solution
$\widetilde x:[t_0,\widetilde T)\to D$ such that
$\widetilde x(t)=x(t)$ for all $t\in[t_0,T)$.
\end{definition}

The following lemma provides the existence of solutions under Assumption~\textup{(A)}. Its proof is based on Theorem~\ref{cyfx1} in the Appendix and is given in Section~\ref{pmr4}.

\begin{lemma}\label{lem:exist}
Suppose that Assumption~\textup{(A)} holds. Then, for any
\[
u(\cdot)\in L^\infty([0,+\infty);\mathbb R^m)
\quad\text{and}\quad
(t_0,x_0)\in[0,+\infty)\times D,
\]
there exists $\delta>0$ such
that system~\eqref{sm} admits a solution $x(\cdot)$ on
$[t_0,t_0+\delta)$ with $x(t_0)=x_0$.
\end{lemma}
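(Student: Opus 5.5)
The plan is to reduce Lemma~\ref{lem:exist} to the intrinsic Carath\'eodory existence theorem, Theorem~\ref{cyfx1} in the Appendix. That theorem yields local solutions for time-dependent tangent vector fields that are measurable in time, continuous in state, and locally integrably bounded. Fix the input \(u(\cdot)\in L^\infty([0,+\infty);\mathbb R^m)\) and the initial pair \((t_0,x_0)\in[0,+\infty)\times D\), and set
\[
F(t,x):=f(t,x,u(t)),\qquad (t,x)\in[0,+\infty)\times D .
\]
Then \(F(t,x)\in T_xM\). The task is to verify that \(F\) satisfies the hypotheses of Theorem~\ref{cyfx1} on the open set \(D\). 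A solution of \(\dot x=F(t,x)\) starting at \(x_0\) is, by Definition~\ref{sssm}, exactly a solution of \eqref{sm} with \(x(t_0)=x_0\).

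First, continuity in \(x\). For fixed \(t\), the value \(u(t)\) is a fixed vector. By the second item of Assumption~\textup{(A)}, the coordinate expression \(z\mapsto d\varphi(F(t,\varphi^{-1}(z)))\) is locally Lipschitz on every chart \(\varphi(U)\subset\mathbb R^n\) with \(U\subset D\). Hence \(x\mapsto F(t,x)\) is continuous as a map \(D\to TM\).

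Second, integrable boundedness. Choose \(R>\rho(x_0,x^*)\) and \(r\ge\|u\|_\infty\). Assumption~\textup{(A)} gives \(|F(t,x)|\le I_{R,r}(t)\) for a.e. \(t\) and all \(x\in D\) with \(\rho(x,x^*)\le R\). Here one must handle the null set of times where \(|u(t)|>r\). I would redefine \(u\) on a null set, which does not change the notion of solution since the equation is only required a.e.

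Third, measurability in \(t\) of \(t\mapsto F(t,x)\) for fixed \(x\). This is the main obstacle, because \(f\) is assumed measurable only in \(t\) for fixed \(u\), and we are composing with the measurable function \(u(\cdot)\). I would work in a chart, so that \(f\) is represented by an \(\mathbb R^n\)-valued map. That map is measurable in \(t\) for fixed \(u\) and continuous in \(u\) for fixed \(t\), so it is Carath\'eodory in \((t,u)\). A standard consequence is that it is jointly measurable in \((t,u)\). Concretely, approximate \(u(\cdot)\) by simple functions \(u_k\to u\) a.e. Each \(t\mapsto f(t,x,u_k(t))\) is a finite sum of measurable pieces, and continuity in \(u\) gives pointwise a.e. convergence to \(f(t,x,u(t))\), so the limit is measurable. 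Chart-independence of measurability follows because the transition Jacobians are continuous and independent of \(t\).

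With these three properties established, Theorem~\ref{cyfx1} (its local existence part) applied to \(F\) near \(x_0\) yields some \(\delta>0\) and a locally absolutely continuous curve \(x:[t_0,t_0+\delta)\to D\) with \(x(t_0)=x_0\) and \(\dot x(t)=F(t,x(t))\) for a.e. \(t\). If that theorem requires \(\delta\) small enough that the curve stays in a ball, shrinking \(\delta\) keeps \(x\) inside \(D\), since \(D\) is open. This curve is the desired solution.
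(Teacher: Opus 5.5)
Your proposal follows essentially the same route as the paper: set $b(t,x)=f(t,x,u(t))$, get measurability in $t$ from the Carath\'eodory-composition argument (the paper cites \cite[Lemma~8.2.3]{AF}, whose proof is your simple-function approximation), get continuity in $x$ from (A), bound $|b(t,x)|$ by $I_{R,r}$ with $r>\|u\|_\infty$, and then invoke Theorem~\ref{cyfx1}. The one point you skip is that Theorem~\ref{cyfx1} is stated for $b:\mathbb R\times D\to TM$, requires bounds for \emph{every} compact $J\subset\mathbb R\times D$, and returns a solution on $(t_0-\delta,t_0+\delta)$, so you must extend $F$ to $t<0$ (this matters for $t_0=0$); the paper does this by reflection, $b(t,x)=b(-t,x)$ with bound $I_{R,r}(|t|)$ and $R$ chosen according to $J$, and then restricts the solution to $[t_0,t_0+\delta)$.
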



By the standard extension argument, every local solution admits a maximal
extension. Since uniqueness is not assumed, different maximal extensions may
exist; all statements below are understood to hold for every such extension.

For \(x\in M\) and \(r\geq0\), let
\begin{align}\label{metric-ball}
 B_x(r):=\{y\in M\mid \rho(x,y)<r\}.
\end{align}
In particular, \(B_x(0)=\emptyset\).

We also need to introduce the definition of an ISS-Lyapunov function.
\begin{definition}\label{issf} 
Set
$
    r_0=\sup\{r>0\mid B_{x^*}(r)\subseteq D\}$.
When \(r_0=\infty\), every limit as \(r\to r_0^-\) below is
understood as a limit as \(r\to\infty\).
For a \(C^1\) function \(h:M\to\mathbb R\), we denote by \(dh\) its
differential; see \cite[Definition~2.8, p.~10]{c}. For each \(p\in M\),
the map \(dh_p:T_pM\to\mathbb R\) is linear; see
\cite[Proposition~2.9, p.~9]{c}.

Let
 $
   V:[0,+\infty)\times B_{x^*}(r_0)\to\mathbb R$
be of class \(C^1\). We write \(V_t(t,x)\) for the partial derivative
with respect to \(t\), and \(d_xV(t,x)\) for the differential with
respect to \(x\). The function \(V\) is called an
\emph{ISS-Lyapunov function} for system~\eqref{sm} if there exist a
constant \(\delta_V>0\) and \(\mathcal K\)-functions
 \[
     \alpha_1,\alpha_2,\alpha_3:[0,r_0)\to[0,+\infty),
     \qquad
     \alpha_4:[0,\delta_V)\to[0,r_0),
 \]
 such that
 \[
    \alpha_1(r)\leq\alpha_2(r),
    \qquad \forall\,r\in[0,r_0),
\]
and
\begin{align}
   &\alpha_1(\rho(x,x^*))
    \leq V(t,x)
    \leq \alpha_2(\rho(x,x^*)),
    \label{iss1}\\
  &  V_t(t,x)+d_xV(t,x)\bigl(f(t,x,u)\bigr)
    \leq-\alpha_3(\rho(x,x^*)),
    \label{iss2}
 \end{align}
where \eqref{iss1} holds for all
\((t,x)\in[0,+\infty)\times B_{x^*}(r_0)\), and \eqref{iss2} holds for
all
$
    (t,x,u)\in[0,\infty)\times B_{x^*}(r_0)\times\mathbb R^m
$
satisfying
$
    |u|<\delta_V$ and
    $
    \rho(x,x^*)\geq\alpha_4(|u|)$.
Here $|\cdot|$ denotes the Euclidean norm in $\mathbb{R}^m$.
\end{definition}

\begin{theorem}\label{isslM}
    Assume that \textup{(A)} holds, and that $x^*\in D$ is an equilibrium
    point of system~\eqref{sm}. Define
    \[
       r_0=\sup\{r>0\mid B_{x^*}(r)\subseteq D\}.
    \]
    Suppose that there exists an ISS-Lyapunov function $V$ with
    $\mathcal K$-functions $\alpha_1,\alpha_2,\alpha_3,\alpha_4$ and a
    constant $\delta_V>0$ as in Definition~\ref{issf}. Set
    \begin{align}\label{rvhr}
        r_V = \lim_{r \to r_0^-} \alpha_2^{-1} \circ \alpha_1(r), \quad 
        \hat{r}_V =
        \sup\left\{s\in[0,\delta_V)\mid \alpha_4(s)<r_V\right\}.
    \end{align}
    Then $\hat{r}_V>0$. Moreover, system~\eqref{sm} is regionally ISS in
    $B_{x^*}(r_V)$. That is, there exist a $\mathcal{KL}$-function
    $\beta:[0,r_V)\times[0,+\infty)\to[0,+\infty)$ and a
    $\mathcal K$-function
    $\gamma:[0,\hat{r}_V)\to[0,+\infty)$ such that, for every
    $t_0\geq0$, every $x_0\in B_{x^*}(r_V)$, and every
$u(\cdot)\in L^\infty([0,+\infty);\mathbb R^m)$ satisfying
$\|u\|_{\infty,t_0}<\hat r_V$, every corresponding maximal solution $x_u(\cdot)$ of
\eqref{sm} satisfying $x_u(t_0)=x_0$ is defined on $[t_0,+\infty)$ and
satisfies

    \begin{align}\label{isse}
        \rho(x_u(t),x^*)
        \leq \beta\bigl(\rho(x_u(t_0),x^*),t-t_0\bigr)
        +\gamma\bigl(\|u\|_{\infty,t_0}\bigr),
        \quad \forall\,t\geq t_0.
    \end{align}
    Moreover, $\rho(x^*,x_u(t))<r_0$ holds for all $t\in[t_0,+\infty)$.
\end{theorem}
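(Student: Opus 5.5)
The plan is to run the classical Sontag-type comparison argument along each maximal solution, replacing the Euclidean norm by $\rho(\cdot,x^*)$ throughout. The new ingredients are two: all sublevel sets used must stay inside $B_{x^*}(r_0)\subseteq D$, and existence for all forward time has to come from the continuation alternative of Theorem~\ref{cyfx1}.

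\textbf{Step 1: positivity of $\hat r_V$.} The map $r\mapsto\alpha_2^{-1}\circ\alpha_1(r)$ is nondecreasing with values in $[0,r_0)$, because $\alpha_1\le\alpha_2$ gives $\alpha_2^{-1}\circ\alpha_1(r)\le r$. Hence the limit $r_V\in(0,r_0]$ exists. Since $\alpha_4$ is continuous with $\alpha_4(0)=0<r_V$, we have $\alpha_4(s)<r_V$ for all small $s>0$, so $\hat r_V>0$.

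\textbf{Step 2: invariance of sublevel sets.} Fix $x_0\in B_{x^*}(r_V)$ and $u$ with $\|u\|_{\infty,t_0}<\hat r_V$. Choose $\bar r$ with
\[
\max\{\rho(x_0,x^*),\alpha_4(\|u\|_{\infty,t_0})\}<\bar r<r_V .
\]
Then pick $r_1<r_0$ with $\alpha_2(\bar r)<\alpha_1(r_1)$; this is possible by the definition of $r_V$ as a limit. Set $c=\alpha_2(\bar r)$ and $\Omega_t=\{x: V(t,x)\le c\}$.

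By \eqref{iss1}, every $x\in\Omega_t$ satisfies $\rho(x,x^*)<r_1$. Moreover $V(t,x)\ge c$ forces $\rho(x,x^*)\ge \bar r>\alpha_4(|u(t)|)$ for a.e.\ $t$. Take a maximal solution on $[t_0,T)$. The function $v(t)=V(t,x_u(t))$ is locally absolutely continuous, since $V$ is $C^1$ and the curve is locally absolutely continuous. By Lemma~\ref{lem:pointwise-chain-rule}, at a.e.\ $t$ its derivative equals $V_t+d_xV(f)$.

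A standard first-exit argument then shows $v(t)\le c$ for all $t$: on any interval where $v>c$, the estimate \eqref{iss2} gives $\dot v<0$ a.e. Consequently $x_u(t)$ stays in the closed ball $\bar B_{x^*}(r_1)$. By completeness and Hopf--Rinow this ball is compact, and it lies in $D$. The continuation alternative in Theorem~\ref{cyfx1} excludes $T<\infty$, so the solution is global and $\rho(x_u(t),x^*)<r_0$ for all $t\ge t_0$.

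\textbf{Step 3: the ISS estimate.} Outside the ball $\{\rho<\alpha_4(\|u\|)\}$ we have
\[
\dot v\le-\alpha_3(\alpha_2^{-1}(v)) \quad\text{a.e.}
\]
A comparison lemma for $\dot y=-\alpha_3\circ\alpha_2^{-1}(y)$, with the $\mathcal K$-functions extended or truncated to $[0,\alpha_2(r_1)]$, yields a $\mathcal{KL}$ bound $v(t)\le\sigma(v(t_0),t-t_0)$ up to the first entry time into the sublevel set $\{V\le\alpha_2(\alpha_4(\|u\|))\}$. After that time, the invariance argument of Step 2 with $\bar r$ replaced by $\alpha_4(\|u\|)$ plus an arbitrarily small margin keeps $v\le\alpha_2(\alpha_4(\|u\|))$. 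Combining the two phases gives
\[
\rho(x_u(t),x^*)\le\alpha_1^{-1}\bigl(\sigma(\alpha_2(\rho_0),t-t_0)\bigr)+\alpha_1^{-1}\circ\alpha_2\circ\alpha_4(\|u\|_{\infty,t_0}).
\]
So we set $\beta(r,s)=\alpha_1^{-1}(\sigma(\alpha_2(r),s))$ for $r<r_V$ and $\gamma=\alpha_1^{-1}\circ\alpha_2\circ\alpha_4$ on $[0,\hat r_V)$.

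\textbf{Main obstacle.} The delicate part is the domain bookkeeping. We must check that $\alpha_1^{-1}$ is applied only to values in the range of $\alpha_1$ on $[0,r_0)$, which holds because $\alpha_2(\rho)<\alpha_2(r_V)\le\lim\alpha_1$. We also need $\beta$ to be a genuine $\mathcal{KL}$ function, uniform in $t_0$ and $x_0$, and $\gamma$ to be a $\mathcal K$-function on $[0,\hat r_V)$. For the comparison step, I would first try constructing $\sigma$ explicitly via $\eta(y)=\int_y^{1}ds/\alpha_3(\alpha_2^{-1}(s))$, with $\sigma(y,s)=\eta^{-1}(\eta(y)+s)$. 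Lack of uniqueness of solutions is harmless, because every estimate is derived along an arbitrary fixed maximal solution.
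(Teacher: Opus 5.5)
Your proposal is correct and follows essentially the paper's route: a first-exit argument keeps the trajectory in a compact ball inside $B_{x^*}(r_0)$, the continuation alternative of Theorem~\ref{cyfx1} rules out finite escape, and a comparison estimate holds until entry into $\{V\le\alpha_2\circ\alpha_4(\|u\|_{\infty,t_0})\}$, with forward invariance of that set afterwards, yielding the same $\beta$ and $\gamma=\alpha_1^{-1}\circ\alpha_2\circ\alpha_4$. The only repair needed is in your explicit comparison function $\sigma(y,s)=\eta^{-1}(\eta(y)+s)$, which is strictly increasing in $y$ only when $\int_{0^+}ds/\alpha_3(\alpha_2^{-1}(s))=+\infty$ (otherwise it is undefined, or must be set to $0$, on a whole interval of small $y$ for each $s>0$), so first replace $\alpha_3\circ\alpha_2^{-1}$ by a smaller locally Lipschitz $\mathcal K$-function as the paper does, or by $\min\{\alpha_3\circ\alpha_2^{-1},\mathrm{id}\}$, and place the base point of $\eta$ in $(0,a_1)$ with $a_1=\lim_{r\to r_0^-}\alpha_1(r)$ rather than at $1$.
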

\begin{remark}\label{rissl}
 Theorem \ref{isslM} states that if there exists an ISS-Lyapunov function, then system (\ref{sm}) is regionally ISS in the set $B_{x^*}(r_V)$. Since $\alpha_1(r)\leq\alpha_2(r)$ for all $r\in[0,r_0)$, we have $r_V \leq r_0$, and hence $B_{x^*}(r_V) \subseteq B_{x^*}(r_0)$. In particular, if
   \begin{align}\label{r0r1}
       \lim_{r\to r_0^-}\alpha_1(r)=\lim_{r\to r_0^-}\alpha_2(r),
   \end{align}
   then the assumption that $\alpha_1$ and $\alpha_2$ are $\mathcal K$-functions implies that $r_0=r_V$. For a given ISS-Lyapunov function $V$, if condition (\ref{r0r1}) does not hold, then $r_V<r_0$.
   In Example \ref{ex52}, we estimate the region of ISS via Theorem \ref{isslM}. Observe that in this example, the estimated region $B_{x^*}(r_V)$ is a strict subset of $B_{x^*}(r_0)$; that is, $B_{x^*}(r_V) \subsetneq B_{x^*}(r_0)$.
 \end{remark}

\begin{corollary}\label{global-ISS}
Assume that all the assumptions of Theorem~\ref{isslM} hold.
Suppose, in addition, that \(D=M\), that
\(\delta_V=+\infty\), and that the comparison functions
\(\alpha_1\) and \(\alpha_2\) are 
\(\mathcal K_\infty\)-functions.
Then system~\eqref{sm} is globally ISS (see Remark \ref{rdriss}).
\end{corollary}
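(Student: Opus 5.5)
The plan is to specialize Theorem~\ref{isslM} to the global setting and check that its conclusion becomes the global ISS property of Remark~\ref{rdriss}. Since \(D=M\) and \(M\) is connected and complete, every geodesic ball lies in \(D\). Hence \(r_0=\sup\{r>0\mid B_{x^*}(r)\subseteq M\}=+\infty\), and by the convention in Definition~\ref{issf} all limits \(r\to r_0^-\) are limits \(r\to\infty\).

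\textbf{Step 1: \(r_V=+\infty\).} Because \(\alpha_1,\alpha_2\) are \(\mathcal K_\infty\)-functions on \([0,\infty)\), the inverse \(\alpha_2^{-1}\) is defined on all of \([0,\infty)\) and is itself of class \(\mathcal K_\infty\). Then \(\alpha_1(r)\to\infty\) gives
\[
r_V=\lim_{r\to\infty}\alpha_2^{-1}(\alpha_1(r))=+\infty ,
\]
so \(B_{x^*}(r_V)=M\).

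\textbf{Step 2: \(\hat r_V=+\infty\).} With \(\delta_V=+\infty\), the function \(\alpha_4\) maps \([0,\infty)\) into \([0,r_0)=[0,\infty)\). Thus \(\alpha_4(s)<\infty=r_V\) holds for every \(s\geq0\), and so
\[
\hat r_V=\sup\{s\in[0,\infty)\mid \alpha_4(s)<r_V\}=+\infty .
\]
The admissibility condition \(\|u\|_{\infty,t_0}<\hat r_V\) is therefore automatic for every \(u\in L^\infty([0,+\infty);\mathbb R^m)\).

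\textbf{Step 3: conclusion.} Theorem~\ref{isslM} now provides \(\beta\in\mathcal{KL}\) defined on \([0,\infty)\times[0,\infty)\) and \(\gamma\in\mathcal K\) defined on \([0,\infty)\). For every \(t_0\), every \(x_0\in M\) and every essentially bounded input, each maximal solution exists on \([t_0,\infty)\) and satisfies \eqref{isse}. This is exactly global ISS as described in Remark~\ref{rdriss}, with \(D_1=M\) and no restriction on the input.

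\textbf{Main obstacle.} The step needing care is the bookkeeping of domains. One must check that the \(\beta\) and \(\gamma\) produced by the theorem live on the full half-lines. This requires reading Definition~\ref{issf} and Theorem~\ref{isslM} with \(r_0=\delta_V=r_V=\hat r_V=\infty\) consistently, and verifying that the \(\mathcal K\)-function conventions of Definition~\ref{kkl1} allow unbounded domains. Once that is confirmed, nothing beyond Theorem~\ref{isslM} is needed.
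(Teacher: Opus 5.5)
Your proposal is correct and follows essentially the same route as the paper. With \(D=M\) one gets \(r_0=+\infty\); the \(\mathcal K_\infty\) property of \(\alpha_1,\alpha_2\) gives \(r_V=+\infty\); \(\delta_V=+\infty\) then gives \(\hat r_V=+\infty\); and Theorem~\ref{isslM} supplies \(\beta\) and \(\gamma\) on full half-lines with the estimate holding for every input (the appeal to connectedness and completeness for \(r_0=+\infty\) is unnecessary, since \(B_{x^*}(r)\subseteq M\) holds trivially).
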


\begin{remark}\label{rem:euclidean-iss}
In the autonomous Euclidean case \(M=\mathbb R^n\), the preceding
corollary recovers the sufficient direction of the classical
ISS-Lyapunov characterization of Sontag and
Wang~\cite{MR1325675}. The converse direction established in that
work, namely that global ISS implies the existence of a global
ISS-Lyapunov function, is not addressed here. Our main concern is
instead the regional problem in which the vector field is prescribed
on an open state domain and both a region of admissible initial states
and a uniform admissible input bound must be determined.
\end{remark}

The preceding corollary addresses the global case \(D=M\). We now return to
the case \(r_0<+\infty\). As observed in Remark~\ref{rissl}, the region
\(B_{x^*}(r_V)\) certified by Theorem~\ref{isslM} may be a strict subset of
\(B_{x^*}(r_0)\). Under additional geometric and dynamical conditions, this
certified region can be enlarged to \(B_{x^*}(r_0)\). To formulate these
conditions, we first introduce some notation from Riemannian geometry. For a
given \(x\in M\), denote by \(O_x\) the origin of the tangent space \(T_xM\),
and, for \(\epsilon>0\), define
   \begin{align}\label{tangent-space-ball}
      B(O_x,\epsilon)=\{X\in T_xM\mid |X|<\epsilon\},
  \end{align}
   where $|X|$ denotes the norm of the vector $X\in T_xM$ induced from the Riemannian metric $g$.
We also denote by \(\exp_x:T_xM\to M\) the exponential map at \(x\);
see \cite[Section~5.5, p.~130]{p1}. Its differential at \(O_x\),
\[
d(\exp_x)_{O_x}:T_{O_x}(T_xM)\to T_xM,
\]
is nonsingular, where \(T_{O_x}(T_xM)\) is naturally isomorphic to \(T_xM\);
see \cite[Proposition~18, p.~131]{p1}. Hence, for sufficiently small
\(\epsilon>0\), the map \(\exp_x\) is a diffeomorphism from
\(B(O_x,\epsilon)\) onto \(B_x(\epsilon)\); see
\cite[Theorem~14, p.~132]{p1}. The latter set is called a
\emph{geodesic ball} of radius \(\epsilon\) centred at \(x\); see
\cite[p.~158]{lee8}.

The injectivity radius \(i(x)\) at \(x\) is the supremum of all
\(\epsilon>0\) such that
\(\exp_x:B(O_x,\epsilon)\to B_x(\epsilon)\) is a diffeomorphism; see
\cite[Section~5.9, p.~142]{p1}.

\begin{theorem}\label{isse31} Assume that all the assumptions in Theorem \ref{isslM} hold. We also assume $r_0\in(0,+\infty)$ and $r_0\leq i(x^*)$,
and
\begin{align}\label{ha1}
  &  \lim_{r\to r_0^-}\frac{\alpha_1(r)}{r_0^2-r^2}=+\infty,
    \\\label{drf3}& d\rho^2(x,x^*)(f(t,x,u))\leq 0,\quad\forall\,(t,x,u)\in[0,+\infty)\times B_{x^*}(r_0) \times\mathbb R^ m\nonumber
    \\ &\phantom{d\rho^2(x,x^*)(f(t,x,u))\leq 0,\quad\forall\,(t,x,u)}\text{with}\;\rho(x,x^*)\geq\alpha_5(|u|),
\end{align}
where $\alpha_5:[0,+\infty)\to[0,+\infty)$ is a $\mathcal K$-function.
Then, system (\ref{sm}) is regionally ISS in $B_{x^*}(r_0)$.
    
\end{theorem}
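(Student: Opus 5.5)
Our approach is to build a new ISS-Lyapunov function $W$ on $B_{x^*}(r_0)$ whose comparison functions satisfy \eqref{r0r1}, so that Theorem~\ref{isslM} applied to $W$ gives $r_W=r_0$; Remark~\ref{rissl} then yields regional ISS in $B_{x^*}(r_0)$. Since $r_0\le i(x^*)$, the map $\exp_{x^*}$ is a diffeomorphism from $B(O_{x^*},r_0)$ onto $B_{x^*}(r_0)$. Hence $\rho^2(x,x^*)=|\exp_{x^*}^{-1}(x)|^2$ is smooth on $B_{x^*}(r_0)$, so \eqref{drf3} is meaningful there. We take the barrier-type candidate
\[
W(t,x)=V(t,x)+\phi\bigl(\rho^2(x,x^*)\bigr),\qquad \phi(s)=\frac{c\,s}{r_0^2-s},
\]
or, more flexibly, $W=V+\psi(\rho(x,x^*))$, where $\psi$ is a $C^1$ $\mathcal K$-function on $[0,r_0)$ that blows up at $r_0$ and satisfies $\psi'\ge0$.

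First, the sandwich bounds. Since $\psi$ blows up at $r_0$, we get $W(t,x)\ge \alpha_1(\rho)+\psi(\rho)=:\tilde\alpha_1(\rho)\to+\infty$ and $W\le \alpha_2(\rho)+\psi(\rho)=:\tilde\alpha_2(\rho)\to+\infty$ as $\rho\to r_0^-$. Both limits in \eqref{r0r1} are therefore $+\infty$ and equal, so $r_W=\lim\tilde\alpha_2^{-1}\circ\tilde\alpha_1(r)=r_0$, since $\tilde\alpha_1(r)$ exceeds every finite value. In this form, assumption \eqref{ha1} is not even needed. Alternatively, one could skip the additive barrier and use \eqref{ha1} to argue that $\alpha_1(r)\to+\infty$, which forces $\alpha_2\to+\infty$ as well. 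But equal infinite limits of $\alpha_1,\alpha_2$ do not give $\alpha_2^{-1}\circ\alpha_1(r)\to r_0$ unless the $\alpha_i$ are reparametrized, which is why we add the barrier. We keep \eqref{ha1} in reserve to guarantee that the barrier is dominated where needed, for example by choosing $\psi(\rho)=\phi(\rho^2)$ with $\phi$ built from $\alpha_1$. That choice keeps $\tilde\alpha_2$ comparable to $\alpha_2$, so the ISS gains stay meaningful.

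Second, the decrease condition. For $|u|<\delta_W:=\delta_V$ and $\rho(x,x^*)\ge\tilde\alpha_4(|u|):=\max\{\alpha_4(|u|),\alpha_5(|u|)\}$, capped below $r_0$, we compute
\[
W_t+d_xW(f)=V_t+d_xV(f)+\phi'(\rho^2)\,d\rho^2(f)\le-\alpha_3(\rho),
\]
using \eqref{iss2} together with $\phi'\ge0$ and \eqref{drf3}. So $\tilde\alpha_3=\alpha_3$, and $\tilde\alpha_4$ is a $\mathcal K$-function on $[0,\delta_V)$. If $\alpha_5(s)\ge r_0$ for some $s$, we shrink $\delta_W$ so that $\tilde\alpha_4$ maps into $[0,r_0)$. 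We also check that $W$ is $C^1$ on $[0,\infty)\times B_{x^*}(r_0)$, which holds because $\rho^2$ is smooth inside the injectivity radius.

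The main obstacle is making sure $W$ is a legitimate ISS-Lyapunov function in the sense of Definition~\ref{issf}, with $\mathcal K$-functions defined on $[0,r_0)$ and finite valued there. In particular we must verify that $\tilde\alpha_2^{-1}\circ\tilde\alpha_1(r)\to r_0$ rigorously. We also need Theorem~\ref{isslM} to tolerate $\tilde\alpha_1,\tilde\alpha_2$ unbounded at $r_0$, and it does, since they are only required to be $\mathcal K$ on $[0,r_0)$. Once this is verified, Theorem~\ref{isslM} applies with $r_W=r_0$ and gives regional ISS in $B_{x^*}(r_0)$.
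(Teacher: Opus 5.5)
Your main construction is correct and gives a valid proof, but it follows a different route from the paper's.

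\textbf{Comparison with the paper.} The paper uses a multiplicative barrier, $\widehat V=V/(r_0^2-\rho^2(x,x^*))$ with $\widehat\alpha_i(r)=\alpha_i(r)/(r_0^2-r^2)$ for $i=1,2,3$. It controls the cross term $V\,d_x\rho^2(f)/(r_0^2-\rho^2)^2$ through $V\geq0$ and \eqref{drf3}, and it quotes \eqref{ha1} to get $\widehat\alpha_1\to+\infty$. Your additive barrier $W=V+c\rho^2/(r_0^2-\rho^2)$ differs in three ways: $\tilde\alpha_1$ blows up independently of $\alpha_1$, $\alpha_3$ is unchanged, and the decrease estimate needs only $\phi'>0$ together with \eqref{drf3}. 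The remaining ingredients coincide with the paper's:
\begin{itemize}
\item the smoothness of $\rho^2(\cdot,x^*)$ on $B_{x^*}(r_0)$, which follows from $r_0\leq i(x^*)$;
\item the new threshold $\max\{\alpha_4,\alpha_5\}$ on an input interval shrunk so that $\alpha_5<r_0$;
\item the final appeal to Theorem~\ref{isslM}.
\end{itemize}
Your version makes transparent that \eqref{ha1} plays no role. The cost is that your comparison functions carry a term unrelated to $V$, although $c>0$ can be taken arbitrarily small. In fact \eqref{ha1} is automatic: $\alpha_1(r)\geq\alpha_1(r_0/2)>0$ for $r\geq r_0/2$, while $r_0^2-r^2\to0^+$. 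So it is not a genuine restriction for the paper's construction either.

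\textbf{Incorrect side remarks.} Several side remarks in your proposal are wrong, although your main line does not depend on them.
\begin{itemize}
\item \eqref{ha1} does not imply $\alpha_1(r)\to+\infty$. In Example~\ref{ex52}, $\alpha_1(r)=r^2/a^2$ is bounded.
\item No reparametrization is needed. If $\alpha_1\leq\alpha_2$ are $\mathcal K$-functions on $[0,r_0)$ with $\alpha_1(r)\to+\infty$, then $\alpha_2^{-1}$ maps $[0,+\infty)$ onto $[0,r_0)$, and $\alpha_2^{-1}\circ\alpha_1(r)\to r_0$. This is Remark~\ref{rissl}, and it is precisely the step you use for $\tilde\alpha_1,\tilde\alpha_2$, so your aside contradicts your own main argument.
\item The \emph{more flexible} form $V+\psi(\rho(x,x^*))$, with an arbitrary $C^1$ $\mathcal K$-function $\psi$ blowing up at $r_0$, can fail to be differentiable at $x^*$; for instance $\psi(r)=r/(r_0-r)$. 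Moreover, \eqref{drf3} controls $d\rho^2$ rather than $d\rho$. Stick with $\psi(r)=\phi(r^2)$, as in your concrete choice.
\end{itemize}
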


\begin{remark}\label{rem:estimating-injectivity-radius}
The exact value of \(i(x^*)\) is not needed in order to apply
Theorem~\ref{isse31}. Since \(r_0\) is determined by the state
domain \(D\), the equilibrium point \(x^*\), and the Riemannian metric, it is sufficient to find a lower bound
\(\iota_*>0\) such that
\[
    r_0\leq \iota_*\leq i(x^*).
\]
Thus, only a lower bound strong enough to verify
\(r_0\leq i(x^*)\) is required. If \(r_0>\iota_*\), this lower
bound alone is not sufficient to verify the required inequality. One
may seek a sharper lower bound or restrict the system and the
ISS-Lyapunov function to a smaller open neighborhood of \(x^*\) for
which \(r_0\leq\iota_*\), provided that the other assumptions of
Theorem~\ref{isse31} remain valid there.

Useful estimates of the injectivity radius under geometric
assumptions are given in \cite[Proposition~2.1]{dl26} and the
references therein. In particular, the Euclidean and hyperbolic
spaces considered below have infinite injectivity radius; see
\cite[Proposition~12.9, p.~352]{lee8}.
\end{remark}

\begin{remark}[Relation with classical Lyapunov functions]
The ISS-Lyapunov function can be regarded as a natural extension of the
classical Lyapunov function to control systems with external inputs. Indeed,
condition (\ref{iss1}) provides positive-definite comparison bounds for
\(V(t,x)\) in terms of the distance from \(x\) to the equilibrium point
\(x^*\).

The main difference lies in the dissipation condition (\ref{iss2}). For a
classical Lyapunov function of the unforced system
\[
    \dot x(t)=f(t,x(t)),
\]
one usually requires
\[
    V_t(t,x)+d_xV(t,x)f(t,x)
    \leq -\alpha_3(\rho(x,x^*))
\]
for all \(x\neq x^*\). In contrast, the ISS-Lyapunov condition only requires
the same strict decrease when the state is sufficiently large compared with
the magnitude of the input, namely when
\[
    |u|<\delta_V,\qquad
    \rho(x,x^*)\geq \alpha_4(|u|).
\]
Thus, the input is allowed to prevent the Lyapunov function from decreasing
near the equilibrium, but only inside a neighborhood whose size is determined
by the input magnitude. This is precisely the mechanism behind input-to-state
stability: the state need not converge to \(x^*\) under a nonzero input, but
it remains ultimately bounded by a function of the input size.

If the vector field does not depend on the input, that is,
\[
    f(t,x,u)=f(t,x),
\]
and if \(V\) is a classical Lyapunov function satisfying (\ref{iss1}) together with the classical
dissipation inequality
\[
    V_t(t,x)+d_xV(t,x)\bigl(f(t,x)\bigr)
    \leq -\alpha_3(\rho(x,x^*)),
\]
then \(V\) is also an ISS-Lyapunov function for the system regarded as an
input system with a dummy input \(u\). Indeed, one may choose any
\(\delta_V>0\) and any \(\mathcal K\)-function
\(\alpha_4:[0,\delta_V)\to[0,r_0)\). The above inequality holds independently
of \(u\), and hence it is stronger than the ISS dissipation condition. Taking
\(u\equiv0\) in the ISS estimate gives
\[
    \rho(x(t),x^*)\leq \beta(\rho(x(t_0),x^*),t-t_0),
\]
which is the usual \(\mathcal{KL}\) estimate for uniform asymptotic
stability of the unforced system on the region certified by
Theorem~\ref{isslM}. In the Euclidean case, this conclusion agrees with
the classical Lyapunov theorem for nonautonomous systems; see
\cite[Theorem~4.9, p.~152]{jind}.
\end{remark}

\section{Applications}\label{sa5}

When \(M=\mathbb R^n\) is endowed with the Euclidean metric,
Theorem~\ref{isslM} reduces to the corresponding result for systems in
Euclidean spaces.
In the following examples, we illustrate how Theorem~\ref{isslM} provides
an estimate of the region of ISS and how, under the additional assumptions
of Theorem~\ref{isse31}, that estimate can be enlarged to the largest
geodesic ball centered at the equilibrium and contained in the prescribed
state domain.
In Examples~\ref{ex52} and~\ref{hex7}, we consider the same system on the
prescribed state domain
\[
    H^2=\{(x_1,x_2)\in\mathbb R^2\mid x_2>0\}.
\]
We first regard \(H^2\) as an open subset of the Euclidean space
\(\mathbb R^2\), and then endow the same state domain with the hyperbolic
metric. This allows us to compare the regions of ISS obtained for the same
dynamics and the same admissible state domain under different Riemannian
metrics.
\begin{example}\label{ex52}
Given $a>0$, consider the system
\begin{equation}\label{els3}
  \left\{
  \begin{array}{l}
      \begin{pmatrix} \dot x_1(t) \\ \dot x_2(t) \end{pmatrix}
      =
      \begin{pmatrix}
          -2(1-e^{-t})x_1(t)x_2(t)^2 \\[6pt]
          (1-e^{-t})x_2(t)\bigl(a^2+x_1(t)^2-x_2(t)^2+u(t)\bigr)
      \end{pmatrix}, \\[4pt]
      \text{for a.e. }t \ge t_0, \\[10pt]
      x_2(t) > 0, \quad \forall\,t \ge t_0,
  \end{array}
  \right.
\end{equation}
where $t_0 \in [0,+\infty)$ and 
$u(\cdot) \in L^\infty([0,+\infty);\mathbb R)$. 
The point $A=(0,a)$ is an equilibrium point.
We estimate the region
of ISS of the system with respect to the Euclidean metric on $\mathbb R^2$.
\end{example}

\begin{proof}[Solution]
First, we perform a coordinate transformation
\begin{align*}
    \xi_1 = x_1, \quad \xi_2 = x_2 - a.
\end{align*}
Under this transformation, system~\eqref{els3} becomes
\begin{equation}\label{els7}
  \left\{
  \begin{array}{l}
      \begin{pmatrix} \dot{\xi}_1(t) \\ \dot{\xi}_2(t) \end{pmatrix}
      =
      \begin{pmatrix}
          -2(1-e^{-t})\xi_1(t)(\xi_2(t)+a)^2 \\[6pt]
          (1-e^{-t})\left(\xi_2(t)+a\right)\bigl(a^2+\xi_1(t)^2-(\xi_2(t)+a)^2+u(t)\bigr)
      \end{pmatrix}, \\[4pt]
      \text{for a.e. }t \ge t_0, \\[10pt]
      \xi_2(t) > -a, \quad \forall\,t \ge t_0,
  \end{array}
  \right.
\end{equation}
where $t_0 \in [0,+\infty)$. The origin $O=(0,0)$ is an equilibrium point. We analyze the input-to-state stability of system~\eqref{els7} instead of system~\eqref{els3}.

Let \(f\) denote the vector field of system~\eqref{els7}, that is,
\begin{align*}
    f(t,\xi,u)&=\begin{pmatrix}
          -2(1-e^{-t})\xi_1(\xi_2+a)^2 \\[6pt]
          (1-e^{-t})(\xi_2+a)(a^2+\xi_1^2-(\xi_2+a)^2+u)
      \end{pmatrix},\\
    &(t,\xi,u)\in[0,+\infty)\times
      \{\xi\in\mathbb R^2\mid \xi_2>-a\}\times\mathbb R.
\end{align*}
For \(\xi\in B_O(a)\), set
\[
    d(\xi)=a^2-|\xi|^2,
\]
and define
\begin{equation}\label{veuc}
    V(t,\xi)
    =
    \frac{(1+e^{-t})|\xi|^2}{d(\xi)},
    \qquad
    (t,\xi)\in[0,+\infty)\times B_O(a).
\end{equation}
Clearly, the vector field $f$ satisfies Assumption~\textup{(A)}.

We show that \(V\) is an ISS-Lyapunov function. Put \(s=e^{-t}\).
A direct computation gives
\begin{align}
    &V_t(t,\xi)+d_\xi V(t,\xi)\bigl(f(t,\xi,u)\bigr)
    \nonumber\\
    &=
    -\frac{|\xi|^2}{d(\xi)^2}
    \Bigl[
    s\,d(\xi)
    +2a^2(1-s^2)(\xi_2+a)(\xi_2+2a)
    \Bigr]
    \nonumber\\
    &\quad
    +
    \frac{2a^2(1-s^2)\xi_2(\xi_2+a)}{d(\xi)^2}\,u.
    \label{vtdxtf}
\end{align}
Assume that
\[
    |u|<\frac{a^2}{2},
    \qquad
    |\xi|\geq\sqrt{2|u|}.
\]
Then \(|u|\leq|\xi|^2/2\). Since \(-a<\xi_2<a\), we have
\[
    \xi_2+a>0,
    \qquad
    |\xi_2|\leq\xi_2+2a.
\]
Moreover,
\begin{align*}
    &\frac{2a^2(1-s^2)\xi_2(\xi_2+a)}{d(\xi)^2}u\\
    &\leq
    \frac{2a^2(1-s^2)(\xi_2+a)|\xi_2||u|}{d(\xi)^2}\\
    &\leq
    \frac{a^2(1-s^2)(\xi_2+a)(\xi_2+2a)|\xi|^2}{d(\xi)^2}.
\end{align*}
Consequently, \eqref{vtdxtf} yields
\begin{align*}
    &V_t(t,\xi)+d_\xi V(t,\xi)\bigl(f(t,\xi,u)\bigr)\\
    &\leq
    -\frac{|\xi|^2}{d(\xi)^2}
    \Bigl[
    s\,d(\xi)
    +a^2(1-s^2)(\xi_2+a)(\xi_2+2a)
    \Bigr].
\end{align*}
Moreover,
\[
    2(\xi_2+a)(\xi_2+2a)-d(\xi)
    =
    \xi_1^2+3(\xi_2+a)^2
    \geq0.
\]
Since the function
\[
    s\longmapsto s+\frac{a^2}{2}(1-s^2)
\]
is concave on $[0,1]$, its minimum is attained at an endpoint. Therefore,
\begin{align*}
    &V_t(t,\xi)+d_\xi V(t,\xi)\bigl(f(t,\xi,u)\bigr)\\
    &\leq
    -\frac{|\xi|^2}{d(\xi)}
    \left[
    s+\frac{a^2}{2}(1-s^2)
    \right]\\
    &\leq
    -c_a\frac{|\xi|^2}{a^2-|\xi|^2},
\end{align*}
where
\[
    c_a=\min\left\{1,\frac{a^2}{2}\right\}>0.
\]
Since \(1\leq1+e^{-t}\leq2\) and
\(a^2-|\xi|^2\leq a^2\), we also have
\[
    \frac{|\xi|^2}{a^2}
    \leq V(t,\xi)
    \leq \frac{2|\xi|^2}{a^2-|\xi|^2}.
\]
Thus, \(V\) is an ISS-Lyapunov function with
\begin{align}\label{euc-a1234}
    \alpha_1(r)
    &=
    \frac{r^2}{a^2},
    &
    \alpha_2(r)
    &=
    \frac{2r^2}{a^2-r^2},
    \nonumber\\
    \alpha_3(r)
    &=
    c_a\frac{r^2}{a^2-r^2},
    &
    \alpha_4(r)
    &=
    \sqrt{2r},
\end{align}
where \(r\in[0,a)\) for \(\alpha_1,\alpha_2,\alpha_3\), and
\(r\in[0,a^2/2)\) for \(\alpha_4\). Thus, \(\delta_V=a^2/2\).

For system~\eqref{els7}, \(r_0=a\). From \eqref{rvhr} and
\eqref{euc-a1234}, we obtain
$
    r_V
    =
    \alpha_2^{-1}(1)
    =
    \frac{\sqrt3 a}{3}$ and $
    \hat r_V
    =
    \frac{a^2}{6}$.
Hence, Theorem~\ref{isslM} implies that system~\eqref{els7} is regionally
ISS in \(B_O(\sqrt3a/3)\), with admissible input radius
\(\hat r_V=a^2/6\). Equivalently, the original system~\eqref{els3} is
regionally ISS in \(B_A(\sqrt3a/3)\) for every input satisfying
\(\|u\|_{\infty,t_0}<a^2/6\).

\end{proof}

\begin{remark}\label{exem6}
For system~\eqref{els3}, Theorem~\ref{isslM} gives regional ISS only in
\(B_A(\sqrt3a/3)\), while Theorem~\ref{isse31} enlarges this region to
\(B_A(a)\). Indeed, for system~\eqref{els7}, \(r_0=a<i(O)=+\infty\), and
\eqref{euc-a1234} gives
\(\alpha_1(r)/(a^2-r^2)=r^2/[a^2(a^2-r^2)]\to+\infty\) as \(r\to a^-\);
hence \eqref{ha1} holds. Set \(\alpha_5(r)=\sqrt r\). If
\((t,\xi,u)\in[0,+\infty)\times B_O(a)\times\mathbb R\) and
\(|\xi|\geq\alpha_5(|u|)\), then \(|u|\leq|\xi|^2\), and, since
\(\rho(\xi,O)=|\xi|\),
\begin{align*}
 d_\xi\rho^2(\xi,O)\bigl(f(t,\xi,u)\bigr)
 &=2(1-e^{-t})(\xi_2+a)
   \left(-|\xi|^2(\xi_2+2a)+\xi_2u\right)\\
 &\leq2(1-e^{-t})(\xi_2+a)|\xi|^2
   \left(-(a+\xi_2)-(a-|\xi_2|)\right)\leq0.
\end{align*}
Thus \eqref{drf3} holds. The other assumptions of Theorem~\ref{isse31} were
verified in the solution of Example~\ref{ex52}. Therefore, that theorem shows
that system~\eqref{els3} is regionally ISS in \(B_A(a)\).
\end{remark}

\begin{example}\label{ex:one-dimensional}
Consider
the time-varying system
\begin{equation}\label{eq:one-dimensional}\left\{\begin{array}{l}
    \dot x(t)
    =
    -\left(\frac14+\frac12\cos t\right)x(t)+u(t),
    \quad
    \text{for a.e. }t\geq t_0,
    \\ 
    x(t)\in(-1,1),\quad \forall\,t\geq t_0
\end{array} \right.
\end{equation}
where \(t_0\geq0\) and
\(u(\cdot)\in L^\infty([0,+\infty);\mathbb R)\).
The origin \(x^*=0\) is an equilibrium point. We estimate the region
of ISS of system~\eqref{eq:one-dimensional}.
\end{example}

\begin{proof}[Solution]
Clearly, the vector field
\[
    f(t,x,u)
    =
    -\left(\frac14+\frac12\cos t\right)x+u
\]
satisfies Assumption~\textup{(A)} for
\[
    (t,x,u)\in[0,+\infty)\times(-1,1)\times\mathbb R.
\]
Since the prescribed state domain is $D=(-1,1)$, we have
\[
    r_0
    =
    \sup\{r>0\mid B_0(r)\subset D\}
    =
    1.
\]

Define
\begin{equation}\label{eq:V-one-dimensional}
    V(t,x)=e^{\sin t}x^2,
    \qquad
    (t,x)\in[0,+\infty)\times(-1,1).
\end{equation}
We have
\begin{equation}\label{eq:V-one-dimensional-bound}
    e^{-1}|x|^2
    \leq
    V(t,x)
    \leq
    e|x|^2,\quad (t,x)\in[0,+\infty)\times(-1,1).
\end{equation}

Moreover, a direct computation gives
\begin{align}
    &V_t(t,x)+d_xV(t,x)\bigl(f(t,x,u)\bigr)
    \nonumber\\
    &=
    e^{\sin t}\cos t\,x^2
    +
    2e^{\sin t}x
    \left[
    -\left(\frac14+\frac12\cos t\right)x+u
    \right]
    \nonumber\\
    &=
    e^{\sin t}
    \left(-\frac12x^2+2xu\right).
    \label{eq:V-one-dimensional-derivative}
\end{align}
Thus,
\[
    2|xu|
    \leq
    \frac14|x|^2,\quad\text{if}\;|u|<\frac18\quad 
    \text{and}\quad
    |x|\geq8|u|.
\]
It follows from~\eqref{eq:V-one-dimensional-derivative} and the above relation that
\begin{align*}
    V_t(t,x)+d_xV(t,x)\bigl(f(t,x,u)\bigr)
    &\leq
    -\frac14e^{\sin t}|x|^2\leq
    -\frac{1}{4e}|x|^2,\quad\forall\,t\in[0,+\infty),
\end{align*}
if \(|u|<1/8\) and \(|x|\geq8|u|\).
Therefore, \(V\) is an ISS-Lyapunov function with
\begin{equation}\label{eq:alpha-one-dimensional}
\begin{aligned}
    \alpha_1(r)&=e^{-1}r^2,
    &
    \alpha_2(r)&=er^2,
    &
    \alpha_3(r)&=\frac{1}{4e}r^2,
    \qquad r\in[0,1),\\
    \alpha_4(r)&=8r,
    &
    \delta_V&=\frac18.
\end{aligned}
\end{equation}
Indeed,
\(\alpha_4:[0,1/8)\to[0,1)\) is a \(\mathcal K\)-function.

We now apply Theorem~\ref{isslM}. Since $\alpha_2^{-1}(s)=\sqrt{\frac{s}{e}}$ for $s\in[0,e)$,
we obtain
\begin{align*}
    r_V
    &=
    \lim_{r\to1^-}
    \alpha_2^{-1}\circ\alpha_1(r)
    =
    \lim_{r\to1^-}\frac{r}{e}
    =
    \frac1e,
\\
    \hat r_V
    &=
    \sup
    \left\{
    s\in\left[0,\frac18\right)
    \,\middle|\,
    8s<\frac1e
    \right\}
    =
    \frac{1}{8e}.
\end{align*}
Hence, Theorem~\ref{isslM} implies that
system~\eqref{eq:one-dimensional} is regionally ISS in
$
    B_0\left(\frac1e\right)
    =
    \left(-\frac1e,\frac1e\right)$.
That is, there exist a \(\mathcal{KL}\)-function \(\beta\) and a
\(\mathcal K\)-function \(\gamma\) such that, for every \(t_0\geq0\), every
\(x_0\in B_0(1/e)\), and every
\(u(\cdot)\in L^\infty([0,+\infty);\mathbb R)\) with
\(\|u\|_{\infty,t_0}<1/(8e)\), the corresponding maximal solution
\(x(\cdot)\) of \eqref{eq:one-dimensional} is defined on
\([t_0,+\infty)\) and satisfies
\[
    |x(t)|
    \leq
    \beta(|x(t_0)|,t-t_0)
    +
    \gamma(\|u\|_{\infty,t_0}),
    \quad
    t\geq t_0.
\]
In addition, \(x(t)\in(-1,1)\) for all \(t\in[t_0,+\infty)\).
\end{proof}

\begin{remark}\label{ex32}
This example does not satisfy condition~\eqref{drf3}. Thus, we cannot apply Theorem~\ref{isse31} to enlarge the certified ISS region from \(B_0(1/e)\) to \(B_0(1)\).

Indeed, taking \(u=0\) and \(t=\pi\), we have
\[
    f(\pi,x,0)
    =
    \frac14x,\quad\forall\,x\in(-1,1).
\]
Then, for every \(x\in(-1,1)\setminus\{0\}\),
\[
    d_x|x|^2\bigl(f(\pi,x,0)\bigr)
    =
    2x\left(\frac14x\right)
    =
    \frac12x^2
    >
    0,
\]
which implies that condition~\eqref{drf3} does not hold. 

Moreover, the system is not regionally ISS in \(B_0(1)\). Indeed, consider
the same differential equation on \(\mathbb R\) with \(u=0\). Let
\(t_0=\frac{2\pi}{3}\), \(t_1=\frac{4\pi}{3}\), and let
\(\bar x(\cdot)\) be its global solution with an initial state satisfying
\begin{align*}
    1>|\bar x(t_0)|> e^{\frac14(t_1-t_0)+\frac12(\sin t_1-\sin t_0)} =e^{\pi/6-\sqrt{3}/2}>e^{-1}.
\end{align*}
Such an initial state exists because
\(-1<\pi/6-\sqrt{3}/2<0\). The explicit solution gives
\begin{align*}
    |\bar x(t_1)|=|\bar x(t_0)|e^{\frac14(t_0-t_1)+\frac12(\sin t_0-\sin t_1)}>1.
\end{align*}
Since \(|\bar x(t_0)|<1\), continuity implies that there exists
\(\tau\in(t_0,t_1)\) such that \(|\bar x(\tau)|=1\). Consequently, the
corresponding maximal solution in the prescribed state domain \((-1,1)\)
cannot be defined on \([t_0,+\infty)\), which contradicts regional ISS in
\(B_0(1)\).
\end{remark}

\begin{example}\label{hex7}
Consider system~\eqref{els3}. Set
$
H^2=\{(x_1,x_2)\in\mathbb R^2\mid x_2>0\}$.
The space \(H^2\) is a smooth manifold with the global coordinate chart
\((H^2,I_{H^2})\), where \(I_{H^2}\) is the restriction of the identity map
\(I:\mathbb R^2\to\mathbb R^2\) to \(H^2\). In this coordinate chart,
\(\{\left.\frac{\partial}{\partial x_1}\right|_x,
\left.\frac{\partial}{\partial x_2}\right|_x\}\) is a basis of \(T_xH^2\)
for every \(x\in H^2\).
Endow $H^2$ with the hyperbolic metric $g$, whose coefficients in this
coordinate chart are
\begin{align}\label{hmh2}
g_{ij}(x)
&=g_x\left(
\left.\frac{\partial}{\partial x_i}\right|_x,
\left.\frac{\partial}{\partial x_j}\right|_x
\right)
=\frac{\delta_{ij}}{x_2^2},
\qquad i,j=1,2,\quad x=(x_1,x_2)\in H^2.
\end{align}
where $\delta_{ij}$ is the Kronecker delta.
We estimate the region
of ISS of system (\ref{els3}) in the hyperbolic metric.
\end{example}

\begin{proof}[Solution]
First, we set $M=D=H^2$ and view system (\ref{els3}) as a system evolving on a Riemannian manifold.
It is shown in \cite[p.~162]{c} that $(H^2, g)$ is a simply connected and complete Riemannian manifold with constant sectional curvature $-1$. According to \cite[Proposition~12.9, p.~352]{lee8}, the injectivity radius of each point of $H^2$ is $+\infty$.

For $x=(x_1,x_2)\in H^2$, set
\begin{align}\label{tftx9}
\tilde f(t,x,u)=\begin{pmatrix}
-2(1-e^{-t})x_1x_2^2 \\
(1-e^{-t})x_2(a^2+x_1^2-x_2^2+u)
\end{pmatrix},\quad\forall\,(t,x,u)\in[0,+\infty)\times H^2\times\mathbb R,
\end{align}
which, as a vector at each $x=(x_1,x_2)\in H^2$, can be expressed in the coordinate chart by
\begin{align*}
\tilde f(t,x,u)=-2(1-e^{-t})x_1x_2^2\frac{\partial}{\partial x_1}+(1-e^{-t})x_2\left( a^2+x_1^2-x_2^2+u\right)\frac{\partial}{\partial x_2}.
\end{align*}
Thus, system (\ref{els3}) can be expressed as
\begin{align}\label{sh21}
\left\{ \begin{array}{l}
\dot x(t)= \tilde f(t,x(t),u(t)),\quad \text{for a.e. } t\in[t_0,+\infty), \\
x(t)\in H^2,\quad\forall\,t\in[t_0,+\infty),
\end{array}
\right.
\end{align}
where $t_0\geq 0$. Clearly, the vector field $\tilde f$ satisfies
Assumption~\textup{(A)}.

Next, we denote by $\rho(x,A)$ the distance between $x\in H^2$ and $A$ in the metric $g$, and present some relevant properties of  $\rho(\cdot,A)$. 
It is shown in \cite[Theorem~33.5.1, p.~614]{for} that \(\rho(\cdot,A)\) is given by
\begin{align}\label{hsd1}
\rho(x,A)  =\ln\frac{\sqrt{x_{1}^{2}+(x_{2}+a)^{2}}+\sqrt{x_{1}^{2}+(x_{2}-a)^{2}}}{\sqrt{x_{1}^{2}+(x_{2}+a)^{2}}-\sqrt{x_{1}^{2}+(x_{2}-a)^{2}}},\quad\forall\,x=(x_1,x_2)\in H^2.
\end{align}

We claim that
\begin{align}\label{drsX1}&
\sqrt{x_1^2+(x_2-a)^2}=2\sqrt{ax_2}\sinh \frac{\rho(x,A)}{2},\quad \sqrt{x_1^2+(x_2+a)^2}=2\sqrt{ax_2}\cosh \frac{\rho(x,A)}{2},
\\ \label{drsX3}&
\sqrt{x_1^2+(x_2-a)^2}\sqrt{x_1^2+(x_2+a)^2}= 2a x_2\sinh\rho(x,A),
\\\label{drsX5}& 
a e^{-\rho(x,A)}\leq x_2\leq a e^{\rho(x,A)},
\\ \label{drsX}&
d\rho^2(x,A)(X)=
\begin{cases}
\displaystyle
\frac{2\rho(x,A)\left(2x_1 X_1+x_2^{-1}(x_2^2-x_1^2-a^2)X_2\right)}
{\sqrt{x_{1}^{2}+(a+x_{2})^{2}}\sqrt{x_{1}^{2}+(a-x_{2})^{2}}},
& x\neq A,\\[3mm]
0, & x=A,
\end{cases}\nonumber
\\&\quad\quad\quad\quad\quad
\quad\quad\quad\quad\quad\quad\quad
\forall\,X=X_1\frac{\partial}{\partial x_1}+X_2\frac{\partial}{\partial x_2}\in T_x H^2,
\end{align}
hold for all $x=(x_1,x_2)\in H^2$.

In fact, for any \(x=(x_1,x_2)\in H^2\), set
\begin{align*}
 \ell_-=\sqrt{x_1^2+(x_2-a)^2},\quad \ell_+=\sqrt{x_1^2+(x_2+a)^2}.
\end{align*}
By direct computations, we obtain, via (\ref{hsd1}), that
\begin{align}\label{eruv}
 e^{\rho(x,A)}=\frac{\ell_++\ell_-}{\ell_+-\ell_-},\quad \ell_+^2=\ell_-^2+4ax_2,
\end{align}
which implies 
\begin{align*}
 \ell_-^2=4ax_2\left( \frac{e^{\rho(x,A)/2}-e^{-\rho(x,A)/2}}{2}\right)^2=4ax_2\left(\sinh \frac{\rho(x,A)}{2}\right)^2.
\end{align*}
Thus, we obtain the first equality of (\ref{drsX1}). The second equality of (\ref{drsX1}) follows directly from the second equality of (\ref{eruv}) and the first equality of (\ref{drsX1}). Equality (\ref{drsX3}) is deduced from (\ref{drsX1}). 

To prove (\ref{drsX5}), we obtain from the first equality of (\ref{drsX1}) that
\begin{align*}
x_1^2+x^2_2+a^2=2ax_2\cosh\rho(x,A),
\end{align*}
which implies
\begin{align*}
x_1^2+(x_2-a\cosh\rho(x,A))^2=a^2\left((\cosh\rho(x,A))^2-1  \right)=a^2 (\sinh\rho(x,A))^2.
\end{align*}
Thus, we have
\begin{align*}
-a\sinh\rho(x,A)\leq x_2-a\cosh\rho(x,A)\leq a\sinh\rho(x,A).
\end{align*}
Then, (\ref{drsX5}) follows.

Finally, we show \eqref{drsX}. By \eqref{dhnh}, \eqref{dual-covector} and \eqref{rho2-gradient}, we obtain that 
\begin{align}\label{drho2-exp-inverse}
d\rho^2(x,A)(X)=-2g_x(\exp_x^{-1}A,X),\quad\forall\,X\in T_x H^2.
\end{align}
Formula~(2.16) in \cite{dl26} provides the expression for $\exp_x^{-1}A$ in the coordinate chart $(H^2, I_{H^2})$, which, together with (\ref{hmh2}), gives the first case in (\ref{drsX}) for $x\neq A$.
For $x=A$, since $\exp_A^{-1}A=O_A$, identity \eqref{drho2-exp-inverse} gives $d\rho^2(A,A)(X)=0$ for all $X\in T_AH^2$.
Moreover, the first case has this value as its continuous extension at $A$: indeed, by (\ref{drsX3}), for $x\neq A$ it can be rewritten as
\[
\frac{\rho(x,A)}{a x_2\sinh\rho(x,A)}
\left(2x_1X_1+x_2^{-1}(x_2^2-x_1^2-a^2)X_2\right),
\]
which tends to $0$ as $x\to A$, because $\rho(x,A)/\sinh\rho(x,A)\to1$, $x_2\to a$, and the expression in parentheses tends to $0$.
This completes the proof of the claim.

Now, we show that the function 
\begin{align}\label{islh1}
V(t,x)=(1+e^{-t})\rho^2(x,A),\quad\forall\,(t,x)\in [0,+\infty)\times H^2
\end{align}
is an ISS-Lyapunov function. 

To this end, by (\ref{drsX}) and the definition of $V$, for $x\neq A$ we have
\begin{align}\label{hzr4}
\begin{array}{ll}
& V_t(t,x)+d_xV(t,x)(\tilde f(t,x,u)) \\
&\displaystyle = -e^{-t}\rho^2(x,A)+2(e^{-2t}-1)\rho(x,A)\sqrt{(x_1^2+(a+x_2)^2)(x_1^2+(a-x_2)^2)} \\
&\displaystyle \quad +\frac{2(1-e^{-2t})\rho(x,A)u(x_2^2-x_1^2-a^2)}{\sqrt{(x_1^2+(a+x_2)^2)(x_1^2+(a-x_2)^2)}},
\end{array}
\end{align}
where $(t,x,u)\in[0,+\infty)\times (H^2\setminus\{A\})\times\mathbb R$.

Set
\begin{align*}
h(x)&=\min\left\{  \rho(x,A), \sqrt{(x_1^2+(a+x_2)^2)(x_1^2+(a-x_2)^2)}\right\},\quad\forall\,x=(x_1,x_2)\in H^2, \\
\zeta(r)&=\min\{  r,a^2(1-e^{-2r})\},\quad \forall\,r\in[0,+\infty).
\end{align*}
We obtain from (\ref{drsX3}) and (\ref{drsX5}) that
\begin{align}\label{hzr1}
h(x)\geq\zeta(\rho(x,A)),\quad\forall\,x\in H^2.
\end{align}
By direct computations, we have
\begin{align*}
(x_1^2+(a+x_2)^2)(x_1^2+(a-x_2)^2)=(x_2^2-x_1^2-a^2)^2+4x_1^2x_2^2,\quad\forall\,(x_1,x_2)\in H^2.
\end{align*}

It follows directly from the definition that
\(\zeta:[0,+\infty)\to[0,a^2)\) is a \(\mathcal K\)-function. Consequently,
by \cite[Lemma~4.2, p.~145]{jind}, the function
\(\zeta^{-1}(\frac{\cdot}{b}):[0,a^2b)\to[0,+\infty)\) is also a
\(\mathcal K\)-function for any \(b>0\).

Now, choose $b\in(0,1)$. Take any $(t,x,u)\in [0,+\infty)\times H^2\times \mathbb R$ with $|u|<a^2b$ and $\rho(x,A)\geq \zeta^{-1}(\frac{|u|}{b})$. If $x=A$, then this condition forces $u=0$, and $V_t(t,A)+d_xV(t,A)(\tilde f(t,A,0))=0$. If $x\neq A$, it follows from the above relation, (\ref{hzr4}) and (\ref{hzr1}) that
\begin{align*}
& V_t(t,x)+d_xV(t,x)(\tilde f(t,x,u)) \\
&\leq(2e^{-2t}-e^{-t}-2)\rho(x,A)h(x)+2(1-e^{-2t})\rho(x,A)|u| \\
&\leq (2e^{-2t}-e^{-t}-2)\rho(x,A)\zeta(\rho(x,A))+2b(1-e^{-2t})\rho(x,A)\zeta\left(\rho(x,A) \right) \\
&=-\left(-2(1-b)s^2+s-2b+2 \right)\rho(x,A)\zeta(\rho(x,A)) \\
&\leq- \min\{2-2b,1\}\rho(x,A)\zeta(\rho(x,A)),
\end{align*}
where $s=e^{-t}$. Since $b\in(0,1)$, the function
\(r\mapsto\min\{2-2b,1\}r\zeta(r)\), \(r\in[0,+\infty)\), is a
\(\mathcal K\)-function. Thus, $V$ is an ISS-Lyapunov function with
\(\delta_V=a^2b\) and the following \(\mathcal K\)-functions:
\begin{align}\label{a1234}
\begin{array}{ll}
\alpha_1(r)&=r^2,\quad \alpha_2(r)=2r^2, \quad\alpha_3(r)=\min\{2-2b,1\}r\zeta(r),\quad\forall\,r\in[0,+\infty), \\[2mm]
\alpha_4(r)&=\zeta^{-1}\left(\displaystyle\frac{r}{b}\right),\quad r\in [0,a^2b).
\end{array}
\end{align}

Finally, we apply Theorem \ref{isslM} to system (\ref{sh21}). Recalling the definitions of $r_V$ and $\hat{r}_V$ (see (\ref{rvhr})), we obtain in this case that
\begin{align*}
r_V=+\infty, \quad \hat{r}_V=a^2b.
\end{align*}
By Theorem \ref{isslM}, there exist a $\mathcal{KL}$-function $\beta: [0,+\infty)\times[0,+\infty)\to[0,+\infty)$ and a $\mathcal K$-function $\gamma: [0,a^2b)\to [0,+\infty)$ such that, for every $t_0\geq 0$, every initial condition $x_0\in H^2$, and every input $u(\cdot)\in L^\infty([0,+\infty);\mathbb R)$ with $\|u\|_{\infty,t_0}<a^2b$, every corresponding maximal solution $x(\cdot)$ of system (\ref{sh21}) satisfying $x(t_0)=x_0$ is defined on $[t_0,+\infty)$ and satisfies
\begin{align*}
\rho(x(t),A)\leq\beta\left(\rho(x(t_0), A), t-t_0 \right)+\gamma\left( \|u\|_{\infty,t_0} \right),\quad \forall\,t\geq t_0.
\end{align*}
Thus, system (\ref{els3}) is regionally ISS in $H^2$, with admissible input radius $a^2b$.
\end{proof}

\begin{remark}\label{exem1}
For both the Euclidean and hyperbolic metrics considered above, an
ISS-Lyapunov function for system~\eqref{els3} must depend on time. Indeed,
the vector fields in \eqref{els7} and \eqref{sh21} satisfy
\[
\begin{aligned}
 f(0,\xi,0)&=O_\xi,
 &&\forall\,\xi\in B_O(a)\setminus\{O\},\\
 \widetilde f(0,x,0)&=O_x,
 &&\forall\,x\in H^2\setminus\{A\},
\end{aligned}
\]
where \(O_\xi\in T_\xi\mathbb R^2\) and \(O_x\in T_xH^2\) denote the
corresponding zero vectors.
If an ISS-Lyapunov function \(W\) were independent of time, then, since
\(\alpha_4(0)=0\), \eqref{iss2} applied with \(t=0\) and \(u=0\) would give
\[
 0=dW(q)(O_q)\leq-\alpha_3\bigl(\rho(q,q^*)\bigr)<0,
\]
where \((q,q^*)=(\xi,O)\) in the Euclidean case with \(\xi\in B_O(a)\setminus\{O\}\), and
\((q,q^*)=(x,A)\) in the hyperbolic case with \(x\in H^2\setminus\{A\}\). This is a contradiction.
\end{remark}

\section{Proof of the main results}\label{pmr4}

First, we prove Lemma \ref{lem:exist}.
\begin{proof}[Proof of Lemma \ref{lem:exist}]
Choose a measurable representative of $u(\cdot)$, still denoted by $u(\cdot)$.
Set $b(t,x)=f(t,x,u(t))$ for $t\geq0$ and extend $b$ to negative
times by reflection, namely by setting $b(t,x)=b(-t,x)$ for $t<0$.
For each fixed \(x\in D\), the map
\((t,v)\mapsto f(t,x,v)\) is a Carath\'eodory map.
Since \(u(\cdot)\) is measurable, it follows from
\cite[Lemma~8.2.3, p.~311]{AF} that
\(t\mapsto f(t,x,u(t))\) is measurable.
Hence, $t\mapsto b(t,x)$ is measurable. Moreover, by
Assumption~\textup{(A)}, the map $x\mapsto b(t,x)$ is continuous for every
fixed $t\in\mathbb R$.

Let $J\subset\mathbb R\times D$ be compact. There exists $R>0$ such that
$\rho(x,x^*)\leq R$ whenever $(t,x)\in J$. Choose
$r>\|u\|_\infty$. Then, by Assumption~\textup{(A)},
\[
    |b(t,x)|\leq I_{R,r}(|t|)
\]
for a.e. $t\in\mathbb R$ and all $x\in D$ such that $(t,x)\in J$.
Since $t\mapsto I_{R,r}(|t|)$ belongs to
$L^1_{\mathrm{loc}}(\mathbb R)$, the extended vector field satisfies all
the hypotheses of Theorem~\ref{cyfx1}. That theorem yields $\delta>0$ and
a local solution through $(t_0,x_0)$ on
$(t_0-\delta,t_0+\delta)$. Its restriction to
$[t_0,t_0+\delta)$ is a solution of system~\eqref{sm} with
$x(t_0)=x_0$.
\end{proof}

We next prove Theorem~\ref{isslM}.

\begin{proof}[Proof of Theorem \ref{isslM}]
Throughout the proof, for a given initial condition $(t_0,x_0)$ and an input $u(\cdot)$, 
we denote by $x_u(\cdot;t_0,x_0)$ (or simply $x_u(\cdot)$ when clear from context) 
every corresponding solution to system \eqref{sm}.
The proof is divided into seven steps.

{\it Step 1.}\;Set $a_1=\lim_{r\to r_0^-}\alpha_1(r)$. From (\ref{iss1}), we have $a_1\leq\lim_{r\to r_0^-}\alpha_2(r)$. Since $\alpha_2:[0,r_0)\to\mathbb R$ is a $\mathcal K$-function, it follows from \cite[Lemma~4.2, p.~145]{jind} that its inverse $\alpha_2^{-1}:[0,a_1)\to\mathbb R$ is also a $\mathcal K$-function, with $\alpha_2^{-1}([0,a_1))\subset[0,r_0)$. Moreover, by the same lemma, $\alpha_3\circ\alpha_2^{-1}:[0,a_1)\to\mathbb R$ is a $\mathcal K$-function. By \cite[Footnote~20, p.~153]{jind}, there exists a locally Lipschitz $\mathcal K$-function $\widetilde\alpha:[0,a_1)\to[0,+\infty)$ such that
\begin{align}\label{tilde-alpha}
 \widetilde\alpha(s)\leq \alpha_3\circ\alpha_2^{-1}(s),
 \qquad \forall\,s\in[0,a_1).
\end{align}

We next consider the following ordinary differential equation
\begin{align}\label{odode}
 \begin{cases}  
 \dot y(t)=-\widetilde\alpha(y(t)), & \forall\,t\geq t_0, \\
 y(t_0)=y_0, 
 \end{cases}
\end{align}
where $(t_0,y_0)\in[0,+\infty)\times[0,a_1)$.  Denote by 
$y(\cdot;t_0,y_0)$ the solution to the above equation with initial time $t_0$ and initial condition $y_0$.
It follows from \cite[Lemma~4.4, p.~145]{jind} that there exists a $\mathcal{KL}$-function $\beta_0: [0,a_1)\times[0,+\infty)\to[0,+\infty)$ such that the solution of (\ref{odode}) satisfies
\begin{align*}
    y(t;t_0,y_0)=\beta_0(y_0,t-t_0),\quad\forall\,(y_0,t_0)\in[0,a_1)\times [0,+\infty),\; t\geq t_0.
\end{align*}

{\it Step 2}.\; We claim that 
\begin{align}\label{alc}
  &  \alpha_2(\hat c)<a_1,\quad\forall\,\hat c\in[0,r_V),
    \\ \label{bais}&\operatorname{cl}\,B_{x^*}\left(\alpha_2^{-1}(c)\right)\subseteq S_{c,t},\quad\forall\,(c,t)\in[0,a_1)\times[0,+\infty),
\end{align}
where for each $c\in[0,a_1)$, 
\begin{align}\label{sct}
    S_{c,t}=\{x\in B_{x^*}(r_0)\mid V(t,x)\leq c\},\quad\forall\,t\in[0,+\infty),
\end{align}
and $\operatorname{cl}\,A$ denotes the closure of  $A\subset M$.

In fact, by \cite[Lemma~4.2, p.~145]{jind}, the function
$\alpha_2^{-1}\circ\alpha_1:[0,r_0)\to\mathbb R$ is a
$\mathcal K$-function. For any
$\hat c\in[0,r_V)=[0,\lim_{r\to r_0^-}\alpha_2^{-1}\circ\alpha_1(r))$,
there exists $r_{\hat c}\in[0,r_0)$ such that
$\hat c=\alpha_2^{-1}\circ\alpha_1(r_{\hat c})$. Then
$\alpha_2(\hat c)=\alpha_1(r_{\hat c})<a_1$. Thus, we obtain (\ref{alc}).

To show (\ref{bais}), we take any $c\in[0,a_1)$. Since $\alpha_1$ is continuous and strictly increasing, and satisfies $\alpha_1(0)=0$,
we have
$
\alpha_1([0,r_0))=[0,a_1)$.
Therefore, there exists $r_c\in[0,r_0)$ such that
$c=\alpha_1(r_c)$.
 Thus, we have $\alpha_2^{-1}(c)=\alpha_2^{-1}\circ\alpha_1(r_c)<r_V$, which, together with Remark \ref{rissl},  implies 
\begin{align*}
    \operatorname{cl}\, B_{x^*}\left( \alpha_2^{-1}(c) \right)\subset B_{x^*}(r_V)\subseteq B_{x^*}(r_0)\subseteq D.
\end{align*}

When $c>0$, $\alpha_2^{-1}(c)>0$. For any $x\in \operatorname{cl}\, B_{x^*}\left( \alpha_2^{-1}(c) \right)$, it follows from (\ref{iss1}) that 
\begin{align*}
    V(t,x)\leq \alpha_2(\rho(x,x^*))\leq \alpha_2\left(\alpha_2^{-1}(c)  \right)=c,\quad\forall\,t\in[0,+\infty),
\end{align*}
which implies $x\in S_{c,t}$ for all $t\geq 0$. Thus we obtain $\operatorname{cl}\, B_{x^*}\left( \alpha_2^{-1}(c) \right)\subseteq S_{c,t}$.
When $c=0$, $\alpha_2^{-1}(0)=0$, it follows from (\ref{iss1}) that  $S_{0,t}=\{x^*\}$ for all $t\geq 0$,  and consequently $\operatorname{cl}\, B_{x^*}\left( \alpha_2^{-1}(0) \right)=\emptyset\subset S_{0,t}$ for all $t\geq 0$. We obtain (\ref{bais}).

{\it Step 3.}\;Since $r_V>0$ and $\alpha_4(\cdot)$ is continuous and strictly increasing with $\alpha_4(0)=0$, we have
$\hat{r}_V>0$.
Let
$
    u(\cdot)\in L^\infty([0,+\infty);\mathbb R^m)$ and $
    t_0\in[0,+\infty)$, 
and assume that
\begin{equation}\label{uhat}
    \|u\|_{\infty,t_0}<\hat{r}_V.
\end{equation}
Here $\|u\|_{\infty,t_0}$ is defined by (\ref{uitz}).
It follows from the definition of $\hat{r}_V$ that
\begin{equation}\label{a4u}
    \|u\|_{\infty,t_0}<\delta_V,
    \qquad
    \alpha_4\left(\|u\|_{\infty,t_0}\right)<r_V.
\end{equation}

We claim that, if the initial state
$x_u(t_0)\in B_{x^*}(r_V)$, then every corresponding maximal solution
$x_u(\cdot)$ to system (\ref{sm}) is defined on
$[t_0,+\infty)$ and satisfies
\begin{align}\label{xubr0i}
    x_u(t)\in B_{x^*}(r_0),
    \qquad \forall\,t\in[t_0,+\infty).
\end{align}

To prove this claim, we first take any corresponding maximal solution
\[
    x_u:[t_0,T_u)\longrightarrow D
\]
where
\(T_u\in(t_0,+\infty]\) is the right endpoint of its interval of
existence. Set
\[
q:=
\max\left\{
\alpha_4\left(\|u\|_{\infty,t_0}\right),
\rho(x^*,x_u(t_0))
\right\}.
\]
Then
by (\ref{a4u}) and $x_u(t_0)\in B_{x^*}(r_V)$, we have
\begin{align}\label{rqrv}
    \rho(x^*, x_u(t_0))\leq  q<r_V.
\end{align}
It follows from (\ref{alc}) that
\[
    \alpha_2(q)<a_1.
\]
Hence, there exists a real number $r_u$ such that
\begin{align}\label{ruqr0}
  r_u\in(q,r_0)\quad\text{and}\quad 
    \alpha_2(q)<\alpha_1(r_u)<a_1.  
\end{align}

We show that
\begin{align}\label{rxut0tu}
    \rho(x^*,x_u(t))<r_u,
    \qquad \forall\,t\in[t_0,T_u).
\end{align}
Suppose, to the contrary, that this assertion is false. By the continuity of
$x_u(\cdot)$ and the distance function, the set
$
\{t\in(t_0,T_u)\mid \rho(x^*,x_u(t))=r_u\}
$ 
is nonempty. Set
\[
T:=\inf\{t\in(t_0,T_u)\mid \rho(x^*,x_u(t))=r_u\}.
\]
Then \(T\in(t_0,T_u)\) and
\[
    \rho(x^*,x_u(T))=r_u.
\]
Since
\[
    \rho(x^*,x_u(t_0))
    \leq q<r_u
    =\rho(x^*,x_u(T)),
\]
the intermediate value theorem implies that the set
\[
    \left\{
    t\in[t_0,T]\mid \rho(x^*,x_u(t))=q
    \right\}
\]
is nonempty. This set is closed in the compact interval $[t_0,T]$.
Thus, we may define
\[
    \hat T_u
    =
    \max\left\{
    t\in[t_0,T]\mid \rho(x^*,x_u(t))=q
    \right\}.
\]
Since $\rho(x^*,x_u(T))=r_u>q$,
we have $\hat T_u<T$.
  Moreover,
\[
    \rho(x^*,x_u(t))>q
    \geq
    \alpha_4\left(\|u\|_{\infty,t_0}\right),
    \qquad
    \forall\,t\in(\hat T_u,T].
\]

Since
\[
    |u(t)|\leq\|u\|_{\infty,t_0}<\delta_V,\quad a.e.\,t\geq t_0,
\]
 the monotonicity of $\alpha_4$ gives
\[
    \rho(x^*,x_u(t))
    >
    \alpha_4\left(\|u\|_{\infty,t_0}\right)
    \geq
    \alpha_4(|u(t)|),\quad a.e.\,t\in(\hat T_u,T).
\]
Therefore, it follows from
(\ref{iss2}) that
\[
    \frac{d}{dt}V(t,x_u(t))
    \leq
    -\alpha_3\left(\rho(x^*,x_u(t))\right)
    \leq0,\quad a.e.\,t\in(\hat T_u,T).
\]

Since $x_u(\cdot)$ is locally absolutely continuous and $V$ is of
class $C^1$, the function
\[
    t\longmapsto V(t,x_u(t))
\]
is locally absolutely continuous. Consequently, by (\ref{ruqr0}), we have
\begin{align*}
    \alpha_1(r_u)
    \leq V(T,x_u(T))
    \leq V(\hat T_u,x_u(\hat T_u))
    \leq \alpha_2(q)
    <\alpha_1(r_u),
\end{align*}
which is a contradiction. Therefore, we obtain (\ref{rxut0tu}).

We next show
\begin{align}\label{tu=i}
    T_u=+\infty.
\end{align}
Suppose, to the contrary, that this assertion is false. Then, $T_u<+\infty$. By (\ref{rxut0tu}), $x_u(t)\in \operatorname{cl}B_{x^*}(r_u)$ for all $t\in [t_0,T_u)$.
Since $M$ is complete, the Hopf--Rinow theorem
\cite[Theorem~2.8, p.~146]{c} implies that
$\operatorname{cl}B_{x^*}(r_u)$ is compact. Since $r_u<r_0$ (see (\ref{ruqr0})), we have
\[
\operatorname{cl}B_{x^*}(r_u)\subset B_{x^*}(r_0)\subseteq D.
\]
By the maximality of \(x_u(\cdot)\), it cannot be extended beyond \(T_u\).
This contradicts assertion \textup{(ii)} of
Theorem~\ref{cyfx1}. Therefore, we obtain (\ref{tu=i}).

Finally, estimate (\ref{rxut0tu}) gives (\ref{xubr0i}).
This proves the claim.

{\it Step 4.}\;Let the input $u(\cdot)\in L^\infty([0,+\infty);\mathbb R^m)$ and the initial time $t_0\in[0,+\infty)$ satisfy
(\ref{uhat}). Then (\ref{a4u}) holds. Assume the initial state $x_u(t_0) \in B_{x^*}(r_V)$.
Let $x_u(\cdot)$ be a solution to system (\ref{sm}) corresponding to the input $u(\cdot)$, the initial time $t_0$ and the initial state $x_u(t_0)$. It follows from  Step 3 that $x_u(t)\in B_{x^*}(r_0)$ for $t\in[t_0,+\infty)$.
From \eqref{alc}, this implies
\[
\alpha_2\!\left(\alpha_4\!\bigl(\|u\|_{\infty,t_0}\bigr)\right) < a_1.
\]
We claim that, if there exist 
$$c\in \left[ 
\alpha_2\left( \alpha_4\left(\|u\|_{\infty,t_0} \right) \right), a_1 \right)$$
 and $t_1\in[t_0,+\infty)$ such that $x_u(t_1)\in S_{c,t_1}$, then $x_u(t)\in S_{c,t}$ for all $t\in[t_1,+\infty)$.

 We prove the claim by contradiction. Suppose, to the contrary, that there
 exists \(t_2\in(t_1,+\infty)\) such that
 \(x_u(t_2)\in B_{x^*}(r_0)\setminus S_{c,t_2}\), that is,
 \(V(t_2,x_u(t_2))>c\).
 Since \(x_u(\cdot)\) is locally absolutely continuous and \(V\) is of class
 \(C^1\), the function \(V(\cdot,x_u(\cdot))\) is locally absolutely
 continuous. Moreover, for a.e. \(t\geq t_0\),
 \[
 \frac{d}{dt}V(t,x_u(t))
 =
 V_t(t,x_u(t))+d_xV(t,x_u(t))f(t,x_u(t),u(t)).
 \]

 Define
 $
 \Omega:=\{t\in[t_1,t_2]\mid V(t,x_u(t))>c\}$.
 Since \(V(\cdot,x_u(\cdot))\) is continuous, \(\Omega\) is relatively open
 in \([t_1,t_2]\). Moreover, \(t_2\in\Omega\), so
 \(\Omega\neq\emptyset\). Let \(I\) be the connected component of \(\Omega\)
 containing \(t_2\). Since \(V(t_1,x_u(t_1))\leq c\), there exists
 \(a\in[t_1,t_2)\) such that
 $
 I=(a,t_2]$
 in the relative topology of \([t_1,t_2]\). In particular,
 \[
 V(a,x_u(a))\leq c,
 \qquad
 V(t,x_u(t))>c\geq
 \alpha_2\left(\alpha_4\left(\|u\|_{\infty,t_0}\right)\right),
 \quad \forall\,t\in(a,t_2].
 \]
 By (\ref{iss1}) and Step 3,
 \[
 V(t,x_u(t))\leq\alpha_2(\rho(x_u(t),x^*)),
 \qquad t\in[t_0,+\infty).
 \]
 It follows that
 \[
 \rho(x_u(t),x^*)>
 \alpha_4(\|u(\cdot)\|_{\infty,t_0}),
 \qquad t\in(a,t_2].
 \]
 On the other hand, by (\ref{a4u}),
 \[
 |u(t)|\leq\|u(\cdot)\|_{\infty,t_0}<\delta_V
 \quad\text{for a.e. }t\geq t_0.
 \]
 Thus,
 \[
 \rho(x_u(t),x^*)>\alpha_4(|u(t)|)
 \quad\text{for a.e. }t\in(a,t_2).
 \]
 By the ISS-Lyapunov inequality, we obtain
 \[
 \frac{d}{dt}V(t,x_u(t))
 \leq-\alpha_3(\rho(x_u(t),x^*))<0
 \quad\text{for a.e. }t\in(a,t_2).
 \]
 Take any \(s\in(a,t_2)\). Since \(V(\cdot,x_u(\cdot))\) is absolutely
 continuous on \([a,s]\), we have
 \[
 V(s,x_u(s))-V(a,x_u(a))
 =
 \int_a^s\frac{d}{d\tau}V(\tau,x_u(\tau))\,d\tau<0.
 \]
 Therefore,
 \[
 V(s,x_u(s))<V(a,x_u(a))\leq c,
 \]
 which contradicts \(s\in(a,t_2)\subset\Omega\). Hence no such \(t_2\)
 exists, and
 $
 V(t,x_u(t))\leq c$ for all $t\geq t_1$.
 Equivalently,
 $
 x_u(t)\in S_{c,t}$ for all $t\geq t_1$.
 This proves the claim.

 {\it Step 5.}\;Let the input $u(\cdot)\in L^\infty([0,+\infty);\mathbb R^m)$ and the initial time $t_0\in[0,+\infty)$ satisfy (\ref{uhat}). Then (\ref{a4u}) holds. Assuming the initial state satisfies 
$x_u(t_0) \in B_{x^*}(r_V) \setminus \operatorname{cl} B_{x^*}\!\left( \alpha_4\!\left(\|u\|_{\infty,t_0}\right) \right)$, 
we set
\begin{align*}
T_{u,t_0} :=
\inf \left\{ t > 0 \mid
x_u(t_0 + t) \in
\operatorname{cl} B_{x^*}\!\left(
\alpha_4\!\left(\|u\|_{\infty,t_0}\right)
\right)
\right\},
\end{align*}
with the convention that \(T_{u,t_0}=+\infty\) if the above set is empty.
Then, $T_{u,t_0} > 0$.

We claim that
\[
\rho(x^*,x_u(t)) \leq \alpha_1^{-1}\!\Bigl( \beta_0\bigl( \alpha_2(\rho(x^*,x_u(t_0))),\, t - t_0 \bigr) \Bigr),
\qquad 
\forall\, t \in [t_0,\, t_0 + T_{u,t_0}),
\]
where  the function $\beta_0$ is given in Step 1.

Indeed, we have 
$
x_u(t) \in D \setminus \operatorname{cl} B_{x^*}\!\Bigl( \alpha_4\bigl(\|u\|_{\infty,t_0} \bigr) \Bigr)
$
for all $t \in [t_0,\, t_0+T_{u,t_0})$.
If \(\|u\|_{\infty,t_0}>0\), this gives
\[
\rho(x_u(t),x^*)>\alpha_4(\|u\|_{\infty,t_0}).
\]
If \(\|u\|_{\infty,t_0}=0\), the corresponding weak inequality follows from
\(\rho(x_u(t),x^*)\geq0=\alpha_4(0)\). Hence, in both cases,
\[
\rho(x_u(t),x^*)\geq\alpha_4(\|u\|_{\infty,t_0}).
\]
Since $|u(t)|\leq\|u\|_{\infty,t_0}<\delta_V$ and
$\alpha_4(|u(t)|)\leq\alpha_4(\|u\|_{\infty,t_0})$ for a.e.
$t \in [t_0,\, t_0+T_{u,t_0})$, we obtain from \eqref{iss2} that,
for a.e. $t \in [t_0,\, t_0+T_{u,t_0})$,
\begin{align*}
    \frac{d}{dt} V(t,x_u(t))
    &= V_t(t,x_u(t)) 
       + d_x V(t,x_u(t)) \, \left(f(t,x_u(t),u(t))\right)  \\
    &\le -\alpha_3(\rho(x^*,x_u(t)))  \\
    &\le 0.
\end{align*}
Consequently, \eqref{iss1} and \eqref{alc} imply
\begin{align*}
    V(t,x_u(t)) 
    \le V(t_0,x_u(t_0)) 
    \le \alpha_2(\rho(x^*,x_u(t_0))) 
    < a_1, \qquad 
    \forall\, t \in [t_0,\, t_0+T_{u,t_0}).
\end{align*}
Thus, $\widetilde\alpha(V(t,x_u(t)))$ is well defined on this interval.
Moreover, \eqref{iss1}, the monotonicity of $\alpha_2$, and
\eqref{tilde-alpha} yield, for a.e.
$t\in[t_0,t_0+T_{u,t_0})$,
\begin{align*}
    \frac{d}{dt} V(t,x_u(t))
    &\le -\alpha_3(\rho(x^*,x_u(t)))\le -\alpha_3 \circ \alpha_2^{-1}\!\bigl(V(t,x_u(t))\bigr)\le -\widetilde\alpha\!\bigl(V(t,x_u(t))\bigr).
\end{align*}
Then, from Step~1, \cite[Lemma~3.4, p.~102]{jind} and \eqref{iss1}, we obtain for all $t \in [t_0,\, t_0+T_{u,t_0})$ that
\begin{align*}
   \alpha_1(\rho(x^*,x_u(t)))
    &\le V(t,x_u(t))
   \\ &\le \beta_0\!\bigl( V(t_0,x_u(t_0)),\, t-t_0 \bigr)  \\
    &\le \beta_0\!\bigl( \alpha_2(\rho(x^*,x_u(t_0))),\, t-t_0 \bigr),
\end{align*}
which yields
\begin{align*}
    \rho(x^*,x_u(t))
    \le \alpha_1^{-1} \!\Bigl( \beta_0\bigl( \alpha_2(\rho(x^*,x_u(t_0))),\, t-t_0 \bigr) \Bigr),
    \qquad 
    \forall\, t \in [t_0,\, t_0+T_{u,t_0}).
\end{align*}
This completes the proof of the claim.

If \(T_{u,t_0}<+\infty\), then continuity and the definition of
\(T_{u,t_0}\) give
\[
x_u(t_0+T_{u,t_0})
\in
\operatorname{cl}B_{x^*}\!\left(\alpha_4(\|u\|_{\infty,t_0})\right).
\]
Set
\[
c_u:=\alpha_2\!\left(\alpha_4(\|u\|_{\infty,t_0})\right).
\]
By \eqref{alc} and \eqref{bais},
\(x_u(t_0+T_{u,t_0})\in S_{c_u,t_0+T_{u,t_0}}\). Hence Step~4 yields
\[
V(t,x_u(t))\leq c_u,\qquad \forall\,t\geq t_0+T_{u,t_0}.
\]
Together with \eqref{iss1}, this gives
\[
\rho(x_u(t),x^*)
\leq
(\alpha_1^{-1}\circ\alpha_2\circ\alpha_4)(\|u\|_{\infty,t_0}),
\qquad \forall\,t\geq t_0+T_{u,t_0}.
\]

{\it Step 6.}\;Let the input $u(\cdot) \in L^\infty([0,+\infty);\mathbb R^m)$ and the initial time $t_0 \ge 0$ satisfy 
$0 < \|u\|_{\infty,t_0}<\hat{r}_V$. Assume that
\[
x_u(t_0) \in \operatorname{cl} B_{x^*}\!\left( \alpha_4\!\left( \|u\|_{\infty,t_0}\right) \right).
\]
With \(c_u:=\alpha_2\!\left(\alpha_4(\|u\|_{\infty,t_0})\right)\), \eqref{alc} and
\eqref{bais} imply \(x_u(t_0)\in S_{c_u,t_0}\). Applying Step~4 and then
\eqref{iss1}, we obtain
\[
\rho(x^*,x_u(t)) \leq \alpha_1^{-1}\!\left( V(t,x_u(t)) \right)
\leq (\alpha_1^{-1} \circ \alpha_2 \circ \alpha_4)\!\left( \|u\|_{\infty,t_0} \right),
\qquad 
\forall\, t \in [t_0, +\infty).
\]
This establishes the required estimate when
\(x_u(t_0)\in
\operatorname{cl} B_{x^*}
\bigl(\alpha_4(\|u\|_{\infty,t_0})\bigr)\).

{\it Step 7.}\;
To conclude the proof, 
we set
\begin{align*}
&\beta(s,t)=\alpha_1^{-1}\!\left(\beta_0\!\left(\alpha_2(s),t\right)\right),\quad\forall\,(s,t)\in[0,r_V)\times[0,+\infty),\\
&\gamma(s)=\alpha_1^{-1}\!\left(\alpha_2\!\left(\alpha_4(s)\right)\right),\quad\forall\,s\in[0,\hat{r}_V).
\end{align*}
For any $s\in[0,\hat{r}_V)$, by the definition of $\hat{r}_V$, we have
$s<\delta_V$ and $\alpha_4(s)<r_V$. Hence, by \eqref{alc},
$\alpha_2(\alpha_4(s))<a_1$, and $\gamma$ is well defined on
$[0,\hat{r}_V)$. It follows from \cite[Lemma~4.2, p.~145]{jind} that
$\beta$ is a $\mathcal{KL}$-function and $\gamma$ is a
$\mathcal K$-function.

We obtain from Step 3 that, for any $u(\cdot)\in L^\infty([0,+\infty);\mathbb R^m)$ and $t_0\in[0,+\infty)$ satisfying $\|u\|_{\infty,t_0}<\hat{r}_V$, if the initial state $x_u(t_0)\in B_{x^*}(r_V)$, every corresponding maximal solution $x_u(\cdot)$ is defined on $[t_0,+\infty)$. Moreover, it follows from Step 5 and Step 6 that inequality \eqref{isse} holds. This completes the proof.
\end{proof}

We next prove Corollary~\ref{global-ISS}.

\begin{proof}[Proof of Corollary \ref{global-ISS}]
Since \(D=M\), we have \(r_0=+\infty\). Because
\(\alpha_1,\alpha_2\) are \(\mathcal K_\infty\)-functions,
\(\alpha_2^{-1}\circ\alpha_1\) is also a
\(\mathcal K_\infty\)-function. Hence
\[
 r_V
 =
 \lim_{r\to+\infty}
 \alpha_2^{-1}\circ\alpha_1(r)
 =
 +\infty.
\]
Moreover, \(\delta_V=+\infty\) and
\(\alpha_4(s)<+\infty=r_V\) for every \(s\geq0\).
Hence
\[
 \widehat r_V
 =
 \sup\{s\geq0:\alpha_4(s)<r_V\}
 =
 +\infty.
\]
Therefore, Theorem~\ref{isslM} gives a
\(\mathcal{KL}\)-function \(\beta\) defined on
\([0,+\infty)\times[0,+\infty)\) and a \(\mathcal K\)-function
\(\gamma\) defined on \([0,+\infty)\), and the estimate
\eqref{isse} holds for every
\[
u(\cdot)\in L^\infty([0,+\infty);\mathbb R^m).
\]
Thus,
system~\eqref{sm} is globally ISS.
\end{proof}

We now prove Theorem \ref{isse31}.

\begin{proof}[Proof of Theorem \ref{isse31}]

Let \(V\) be the ISS-Lyapunov function in Theorem~\ref{isslM}, with
\(\mathcal K\)-functions
\(\alpha_1,\allowbreak \alpha_2,\allowbreak \alpha_3,\allowbreak \alpha_4\) as in
\eqref{iss1} and \eqref{iss2}. Set
\begin{align*}
   & \widehat V(t,x)=
   \frac{V(t,x)}{r_0^2-\rho^2(x,x^*)},
   \quad
   \forall\,(t,x)\in[0,+\infty)\times B_{x^*}(r_0),\\
   & \widehat\alpha_i(r)=
   \frac{\alpha_i(r)}{r_0^2-r^2},
   \quad
   \forall\,r\in[0,r_0),\quad i=1,2,3,\\
   & \widehat\delta_V=
   \sup\left\{s\in[0,\delta_V)\mid \alpha_5(s)<r_0\right\},\\
   & \widehat\alpha_4(s)=\max\{\alpha_4(s),\alpha_5(s)\},
   \quad
   \forall\,s\in[0,\widehat\delta_V).
\end{align*}
Since \(r_0\leq i(x^*)\), the function \(x\mapsto \rho^2(x,x^*)\) is of
class \(C^1\) on \(B_{x^*}(r_0)\). Hence, \(\widehat V\) is of class
\(C^1\) on \([0,+\infty)\times B_{x^*}(r_0)\).

It is clear that \(\widehat\alpha_i\), \(i=1,2,3\), are
\(\mathcal K\)-functions. Since \(\alpha_5(0)=0<r_0\), we have \(\widehat\delta_V>0\), and hence \(\widehat\alpha_4\) is also a
\(\mathcal K\)-function on \([0,\widehat\delta_V)\). Moreover, by \eqref{ha1},
\[
    \lim_{r\to r_0^-}\widehat\alpha_1(r)=+\infty.
\]

From \eqref{iss1}, we have
\begin{align*}
    \widehat\alpha_1(\rho(x,x^*))
    \leq \widehat V(t,x)
    \leq \widehat\alpha_2(\rho(x,x^*)),
    \quad
    \forall\,(t,x)\in[0,+\infty)\times B_{x^*}(r_0).
\end{align*}

Next, take any
\[
(t,x,u)\in[0,+\infty)\times B_{x^*}(r_0)\times\mathbb R^m
\]
such that
\[
    |u|<\widehat\delta_V,
    \qquad
    \rho(x,x^*)\geq \widehat\alpha_4(|u|).
\]
Then
\[
    |u|<\delta_V,
    \qquad
    \rho(x,x^*)\geq \alpha_4(|u|)
    \quad\text{and}\quad
    \rho(x,x^*)\geq \alpha_5(|u|).
\]
Therefore, by \eqref{iss2} and \eqref{drf3}, we obtain
\begin{align*}
    &\widehat V_t(t,x)
    +d_x\widehat V(t,x)\bigl(f(t,x,u)\bigr)\\
    &=
    \frac{
    V_t(t,x)+d_xV(t,x)\bigl(f(t,x,u)\bigr)
    }{
    r_0^2-\rho^2(x,x^*)
    }\\
    &\quad
    +
    \frac{
    V(t,x)
    }{
    \bigl(r_0^2-\rho^2(x,x^*)\bigr)^2
    }
    d_x\rho^2(x,x^*)\bigl(f(t,x,u)\bigr)\\
    &\leq
    -\frac{\alpha_3(\rho(x,x^*))}
    {r_0^2-\rho^2(x,x^*)}\\
    &=
    -\widehat\alpha_3(\rho(x,x^*)).
\end{align*}
Thus, \(\widehat V\) is an ISS-Lyapunov function on
\(B_{x^*}(r_0)\).

Since
\[
    \lim_{r\to r_0^-}\widehat\alpha_1(r)=+\infty
    \quad\text{and}\quad
    \lim_{r\to r_0^-}\widehat\alpha_2(r)=+\infty,
\]
we have
\[
    \lim_{r\to r_0^-}
    \widehat\alpha_2^{-1}\circ\widehat\alpha_1(r)=r_0.
\]
Hence, applying Theorem~\ref{isslM} to the ISS-Lyapunov function
\(\widehat V\), we conclude that system \eqref{sm} is regionally ISS in the set
\(B_{x^*}(r_0)\). This completes the proof.
\end{proof}

\section{Appendix}\label{aur9}

For completeness, we collect some notions that are used to state the problem and the main results.

\begin{definition}\label{kkl1}
Let $r>0$. A function $\alpha:[0,r)\to[0,+\infty)$ is said to be a
\emph{\(\mathcal K\)-function} if it is continuous, strictly increasing, and
satisfies $\alpha(0)=0$.

If $r=+\infty$ and
$
\lim_{s\to+\infty}\alpha(s)=+\infty$, 
then $\alpha$ is said to be a \emph{\(\mathcal K_\infty\)-function}.

A continuous function $\beta:[0,r)\times[0,+\infty)\to[0,+\infty)$ is said
to be a \emph{\(\mathcal{KL}\)-function} if, for each fixed $t\geq0$, the
function $\beta(\cdot,t)$ is a $\mathcal K$-function, and for each fixed
$s\in[0,r)$, the function $\beta(s,\cdot)$ is decreasing and satisfies
$
\lim_{t\to+\infty}\beta(s,t)=0.
$
\end{definition}

We first recall the chain rule for differentials of \(C^1\) maps between
smooth manifolds; see \cite[Proposition~8.13, p.~256]{gallier-quaintance}.

\begin{lemma}\label{cr}
    Let $M_1$, $M_2$, and $M_3$ be smooth manifolds with or without boundary,
    let $F_1: M_1 \to M_2$ and $F_2: M_2 \to M_3$ be \(C^1\) maps, and let
    $x\in M_1$. Then the following chain rule holds:
    \begin{align*}
        d(F_2 \circ F_1)_x = d(F_2)_{F_1(x)} \circ d(F_1)_x \colon T_x M_1 \to T_{F_2 \circ F_1(x)} M_3.
    \end{align*}
    Moreover, if $F_1$ is a \(C^1\)-diffeomorphism, then $d(F_1)_x : T_x M_1 \to T_{F_1(x)} M_2$ is an isomorphism, and $\left(d(F_1)_x\right)^{-1} = d(F_1^{-1})_{F_1(x)}$.
\end{lemma}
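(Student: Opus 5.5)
The statement just before the paper breaks off is Lemma~\ref{cr}, the chain rule for differentials of \(C^1\) maps, together with the fact that the differential of a \(C^1\)-diffeomorphism is an isomorphism with inverse \(d(F_1^{-1})_{F_1(x)}\). I will reduce everything to the Euclidean chain rule through coordinate charts. I use the definition of the differential in \cite{gallier-quaintance}: in a chart, \(dF_x\) acts on a tangent vector \(X=\sum_i X^i\,\partial/\partial x^i|_x\) through the Jacobian of the coordinate representation. Equivalently, \(dF_x(X)\) is the derivation \(h\mapsto X(h\circ F)\).

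\emph{Step 1: the chain rule.} Take \(X\in T_xM_1\) and a \(C^1\) germ \(h\) at \(F_2(F_1(x))\). Using the derivation description, I compute
\[
d(F_2\circ F_1)_x(X)(h)=X(h\circ F_2\circ F_1)=dF_{1,x}(X)(h\circ F_2)=dF_{2,F_1(x)}\bigl(dF_{1,x}(X)\bigr)(h).
\]
This gives the chain rule directly. If one prefers to avoid derivations in the \(C^1\) setting, I would argue in charts instead. Choose charts \(\varphi_1\) about \(x\), \(\varphi_2\) about \(F_1(x)\), and \(\varphi_3\) about \(F_2(F_1(x))\). Shrink the domains so that the composition is defined, which continuity permits. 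Then
\[
\varphi_3\circ(F_2\circ F_1)\circ\varphi_1^{-1}=(\varphi_3\circ F_2\circ\varphi_2^{-1})\circ(\varphi_2\circ F_1\circ\varphi_1^{-1}).
\]
The Euclidean chain rule now shows that the Jacobian of the composite is the product of the Jacobians, evaluated at the appropriate points. Since the coordinate matrices of the differentials are exactly these Jacobians, the identity follows. Points on a boundary need only the one-sided version of the Euclidean chain rule, with charts into half-spaces.

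\emph{Step 2: the diffeomorphism case.} Apply Step~1 to \(F_1^{-1}\circ F_1=\mathrm{id}_{M_1}\) and to \(F_1\circ F_1^{-1}=\mathrm{id}_{M_2}\). In charts the differential of the identity map is the identity matrix, so \(d(\mathrm{id})=\mathrm{id}\). This yields
\[
d(F_1^{-1})_{F_1(x)}\circ d(F_1)_x=\mathrm{id}_{T_xM_1},\qquad d(F_1)_x\circ d(F_1^{-1})_{F_1(x)}=\mathrm{id}_{T_{F_1(x)}M_2}.
\]
Hence \(d(F_1)_x\) is a linear isomorphism whose inverse is the stated map.

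\emph{Main obstacle.} The only delicate point is bookkeeping. One must check that the chart description of the differential does not depend on the charts chosen, and one must handle boundary charts. Chart-independence is itself a consequence of the Euclidean chain rule applied to transition maps. So I expect no genuine difficulty, only care in stating where each Jacobian is evaluated.
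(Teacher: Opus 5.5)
Your proposal is correct and follows the paper's route. The paper simply cites the chain rule from Gallier--Quaintance (Proposition~8.13), whereas you verify it in charts via the Euclidean chain rule; the isomorphism claim is then obtained exactly as you do, by applying the chain rule to $F_1^{-1}\circ F_1=\operatorname{id}_{M_1}$ and $F_1\circ F_1^{-1}=\operatorname{id}_{M_2}$.
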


The last assertion follows by applying the chain rule to
\(F_1^{-1}\circ F_1=\operatorname{id}_{M_1}\) and
\(F_1\circ F_1^{-1}=\operatorname{id}_{M_2}\).

Next, we collect the notation and identities for covariant derivatives,
cotangent norms, and the squared distance function that will be used below.
We keep the notation \(B_x(r)\) and \(B(O_x,r)\) from
\eqref{metric-ball} and \eqref{tangent-space-ball}, respectively, and the
notation \(O_x\), \(\exp_x\), and \(i(x)\) introduced before Theorem
\ref{isse31}. By \cite[Theorem~5.10, p.~122]{lee8}, there exists a unique
connection on \(TM\) that is compatible with \(g\) and symmetric. It is called
the \emph{Levi--Civita connection} of \(g\). Throughout this appendix,
\(\nabla\) denotes this connection and the connections induced by it on the
tensor bundles of \(M\). If \(\sigma:I\to M\) is a smooth curve and \(V\) is a
smooth vector field along \(\sigma\), then
\(\nabla_{\dot\sigma(t)}V(t)\) denotes the covariant derivative of \(V\) along
\(\sigma\) at \(t\); this derivative is denoted by \(D_tV(t)\) in
\cite[Theorem~4.24, pp.~101--102]{lee8}. For a smooth function
\(h:M\to\mathbb R\), \(\nabla h\) denotes its covariant derivative, regarded
as a cotangent vector field. It follows from \cite[(5.9)]{dzgJ} and
\cite[pp.~281 and~284]{le1} that, for any \(x\in M\),
\begin{align}\label{dhnh}
   dh_x(X)=\nabla h(x)(X)=X(h),\quad \forall X\in T_xM. 
\end{align}
If \(\eta\in T_x^*M\), its norm is
\begin{align}\label{cotangent-norm}
    |\eta|:=\sup\{|\eta(X)|:X\in T_xM,\ |X|\leq1\}.
\end{align}
For \(v\in T_xM\), its dual covector \(v^\top\in T_x^*M\) is defined by
\begin{align}\label{dual-covector}
 v^\top(X)=g_x(v,X),\quad \forall X\in T_xM.
\end{align}
Then we have
\begin{align}\label{ndce1}
   |v^\top|=|v|.
\end{align}
Indeed, the Cauchy--Schwarz inequality gives \(|v^\top|\leq|v|\), while
equality follows by taking \(X=v/|v|\) when \(v\ne O_x\).

The definitions \eqref{cotangent-norm} and \eqref{dual-covector} are the
corresponding special cases of those in \cite[Section~2.2]{cdz} and
\cite[Sections~2.1 and~5.2]{dzgJ}.

We next prove Lemma~\ref{lem:pointwise-chain-rule}.

\begin{proof}[Proof of Lemma~\ref{lem:pointwise-chain-rule}]
Set \(y=\varphi\circ\gamma\). Since \(y(\cdot)\) is differentiable at \(t_0\),
it is continuous there, and hence so is \(\gamma(\cdot)\). Therefore,
\(\gamma(t)\in V\) for \(t\) sufficiently close to \(t_0\). By restricting
\(\varphi\) to \(U\cap V\) and shrinking \(J\), we may assume that
\(U\subset V\). If \(\psi\) is another coordinate chart about \(x\), then, for
\(t\) sufficiently close to \(t_0\),
\[
 \psi\circ\gamma=(\psi\circ\varphi^{-1})\circ y.
\]
Since \(\psi\circ\varphi^{-1}\) is a smooth map between open subsets of
Euclidean spaces, the ordinary Euclidean chain rule at \(t_0\) gives
\[
 (\psi\circ\gamma)'(t_0)
 =d(\psi\circ\varphi^{-1})_{\varphi(x)}\big(y'(t_0)\big).
\]
This pointwise form of the Euclidean chain rule requires only that \(y\) be
differentiable at \(t_0\) and that the outer map be differentiable at
\(y(t_0)=\varphi(x)\); it does not require \(y\) to be of class \(C^1\) in a
neighbourhood of \(t_0\).
Here \(d(\psi\circ\varphi^{-1})_{\varphi(x)}\) denotes the usual differential
of a map between Euclidean spaces. By the chain rule for differentials in
Lemma \ref{cr}, if
\[
 v=d(\varphi^{-1})_{\varphi(x)}\big(y'(t_0)\big),
\]
then
\[
 (\psi\circ\gamma)'(t_0)=d\psi_x(v)
 \quad\text{and}\quad
 d(\psi^{-1})_{\psi(x)}\big((\psi\circ\gamma)'(t_0)\big)=v,
\]
which proves that the vector in
\eqref{pointwise-velocity} is independent of the chosen chart.

If \(\gamma\) is of class \(C^1\), then the chain rule gives
\[
(\varphi\circ\gamma)'(t_0)
=
d\varphi_x\left(
d\gamma_{t_0}
\left(\left.\frac{d}{dt}\right|_{t_0}\right)
\right).
\]
Therefore, Lemma~\ref{cr} yields
\[
d(\varphi^{-1})_{\varphi(x)}
\bigl((\varphi\circ\gamma)'(t_0)\bigr)
=
d\gamma_{t_0}
\left(\left.\frac{d}{dt}\right|_{t_0}\right),
\]
where the right-hand side is the usual velocity vector of \(\gamma\) at
\(t_0\) (see \eqref{dgtzs}).

Since \(h=(h\circ\varphi^{-1})\circ\varphi\) on \(U\), Lemma \ref{cr}
gives the coordinate representation
\begin{align}\label{coordinate-differential-h}
 dh_x=d(h\circ\varphi^{-1})_{\varphi(x)}\circ d\varphi_x.
\end{align}
Since \(h\circ\varphi^{-1}\) is of class \(C^1\), the same pointwise Euclidean
chain rule yields
\begin{align*}
    \left.\frac{d}{dt}\right|_{t=t_0}h(\gamma(t))
 &=\left.\frac{d}{dt}\right|_{t=t_0}
 (h\circ\varphi^{-1})\big((\varphi\circ\gamma)(t)\big)
 \\&=d(h\circ\varphi^{-1})_{\varphi(x)}\big((\varphi\circ\gamma)'(t_0)\big)
 \\&=dh_x\big(\dot\gamma(t_0)\big),
\end{align*}
where the last equality follows from \eqref{coordinate-differential-h},
\eqref{pointwise-velocity} and Lemma \ref{cr}. This proves \eqref{pointwise-chain-rule}.
\end{proof}

We next derive a pointwise chain rule for the first partial covariant
derivative of a smooth function on \(M\times M\). This identity will be used
in the proof of Lemma~\ref{lem:distance-ac-curve}.
Let \(\Omega\subset M\times M\) be an open set, and let
\(F:\Omega\to\mathbb R\) be a smooth function. For \(x,y\in M\), set
\[
 \Omega_x:=\{q\in M:(x,q)\in\Omega\},
 \qquad
 \Omega^y:=\{q\in M:(q,y)\in\Omega\}.
\]
The sets \(\Omega_x\) and \(\Omega^y\) are open. At each
\((x,y)\in\Omega\), \(\nabla_1F(x,y)\in T_x^*M\) and
\(\nabla_2F(x,y)\in T_y^*M\) denote the covariant derivatives with respect
to the first and second arguments, respectively. Thus,
\(\nabla_1F(\cdot,y)\) is the covariant derivative of the smooth function
\(F(\cdot,y):\Omega^y\to\mathbb R\). If \((x,y)\in\Omega\),
\(X\in T_xM\), and \(Y\in T_yM\), we define
\begin{align}\label{mixed-first-covector}
 \nabla_2\nabla_1F(x,y)(X,\cdot)
 :=\nabla\big(q\mapsto\nabla_1F(x,q)(X)\big)(y)\in T_y^*M,
\end{align}
where \(\nabla\big(q\mapsto\nabla_1F(x,q)(X)\big)(y)\) denotes the covariant
derivative at \(y\) of the smooth function
\(q\mapsto\nabla_1F(x,q)(X)\) on \(\Omega_x\). We further define
\begin{align}\label{mixed-pairing}
 \nabla_2\nabla_1F(x,y)(X,Y)
 :=\nabla_2\nabla_1F(x,y)(X,\cdot)(Y),
\end{align}
where the right-hand side denotes the pairing of the cotangent vector
\(\nabla_2\nabla_1F(x,y)(X,\cdot)\in T_y^*M\) with the tangent vector
\(Y\in T_yM\). These definitions are the corresponding special cases of those
in \cite[Section~2.2]{cdz} and \cite[Section~5.2]{dzgJ}.

We now apply Lemma \ref{lem:pointwise-chain-rule} to derive a pointwise
chain rule for \(\nabla_1F\) along a curve. Let \(\Omega\) and \(F\) be as above, let
\(\gamma\) satisfy the assumptions of that lemma, and set \(y=\gamma(t_0)\).
Fix \(x\in M\) such that \((x,y)\in\Omega\). Since \(\Omega_x\) is an open
neighborhood of \(y\) and \(\gamma\) is continuous at \(t_0\), we have
\(\gamma(t)\in\Omega_x\) for all \(t\) sufficiently close to \(t_0\).
Therefore, for every \(X\in T_xM\), we may apply Lemma
\ref{lem:pointwise-chain-rule} on the open submanifold \(\Omega_x\) to the
smooth function \(q\mapsto\nabla_1F(x,q)(X)\). Then \eqref{dhnh},
\eqref{pointwise-chain-rule}, and
\eqref{mixed-first-covector}--\eqref{mixed-pairing} give
\begin{align}\label{mixed-chain-rule-scalar}
 \left.\frac{d}{dt}\right|_{t=t_0}
 \nabla_1F(x,\gamma(t))(X)
 =\nabla_2\nabla_1F(x,y)\big(X,\dot\gamma(t_0)\big).
\end{align}
Since \(T_x^*M\) is finite-dimensional, applying
\eqref{mixed-chain-rule-scalar} to the elements of a basis of \(T_xM\) yields
the following identity in \(T_x^*M\):
\begin{align}\label{mixed-chain-rule-covector}
 \lim_{s\to0}
 \frac{\nabla_1F(x,\gamma(t_0+s))-\nabla_1F(x,\gamma(t_0))}{s}
 =
 \nabla_2\nabla_1F(x,y)(\,\cdot\,,\dot\gamma(t_0)).
\end{align}

The following lemma records the identities for the squared distance function
that are needed below; see \cite[Lemmas~2.1--2.2]{cdz} and
\cite[Lemmas~5.1--5.2]{dzgJ}.
\begin{lemma}\label{prms}
For any \(x,y\in M\) satisfying
\(\rho(x,y)<\min\{i(x),i(y)\}\), the function \(\rho^2\) is smooth in a
neighbourhood of \((x,y)\), and
\begin{align}
 |\exp_x^{-1}y|&=|\exp_y^{-1}x|=\rho(x,y),\label{exp-distance}\\
 \nabla_1\rho^2(x,y)&=-2(\exp_x^{-1}y)^\top,\qquad
 \nabla_2\rho^2(x,y)=-2(\exp_y^{-1}x)^\top.\label{rho2-gradient}
\end{align}
Consequently, by \eqref{exp-distance}, \eqref{rho2-gradient}, and
\eqref{ndce1},
\begin{align}\label{rho-gradient-norm}
 \rho(x,y)=\frac12|\nabla_1\rho^2(x,y)|.
\end{align}
In particular, for any \(x\in M\),
\begin{align}\label{rho2-diagonal-gradient}
 \nabla_1\rho^2(x,x)=0_x^*.
\end{align}
Here \(0_x^*\) denotes the zero covector in \(T_x^*M\).
Moreover, for any \(x\in M\) and \(Y,Z\in T_xM\),
\begin{align}\label{rho2-diagonal-mixed}
 \nabla_2\nabla_1\rho^2(x,x)(Y,Z)=-2g_x(Y,Z).
\end{align}
Equivalently, by \eqref{dual-covector} and \eqref{ndce1}, for each fixed \(Z\),
\begin{align}\label{rho2-diagonal-mixed-covector}
 \nabla_2\nabla_1\rho^2(x,x)(\,\cdot\,,Z)=-2Z^\top,\qquad
 \left|\nabla_2\nabla_1\rho^2(x,x)(\,\cdot\,,Z)\right|=2|Z|.
\end{align}
\end{lemma}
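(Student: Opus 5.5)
The plan is to derive everything from the normal-coordinate description of $\rho$ inside the injectivity radius. Fix $x,y$ with $\rho(x,y)<\min\{i(x),i(y)\}$. Since $\exp_x$ is a diffeomorphism from $B(O_x,i(x))$ onto $B_x(i(x))$, and radial geodesics in a normal ball are minimizing (Gauss lemma, \cite{lee8}), there is a unique $v=\exp_x^{-1}y\in B(O_x,i(x))$ with $|v|=\rho(x,y)$. By symmetry of $\rho$, the same argument at $y$ gives $|\exp_y^{-1}x|=\rho(x,y)$. This is \eqref{exp-distance}.

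\emph{Smoothness of $\rho^2$ near $(x,y)$.} Consider the smooth map $E:TM\to M\times M$, $E(p,w)=(p,\exp_p w)$. Its differential at $(x,v)$ is block triangular, with diagonal blocks the identity and $d(\exp_x)_v$. The latter is invertible because $|v|<i(x)$, so $y$ is not conjugate to $x$ along $t\mapsto\exp_x(tv)$. By the inverse function theorem, $E$ has a smooth local inverse $(p,q)\mapsto(p,w(p,q))$ near $(x,y)$.

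It remains to show that $w(p,q)=\exp_p^{-1}q$ is the minimizing preimage, so that $\rho^2(p,q)=|w(p,q)|^2$ near $(x,y)$. For this I would use the continuity of $p\mapsto i(p)$ on a complete manifold together with the continuity of $\rho$. These give $\rho(p,q)<i(p)$ for $(p,q)$ close to $(x,y)$, so the minimizing vector $\exp_p^{-1}q$ exists and is unique. It must coincide with $w(p,q)$: both depend continuously on $(p,q)$, agree at $(x,y)$, and $E$ is injective near $(x,v)$. This identification is the step I expect to be the main obstacle, since it is where completeness and the injectivity radius are really used. Once it is done, $\rho^2(p,q)=g_p(w,w)$ is smooth near $(x,y)$.

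\emph{Gradients.} Let $\sigma(t)=\exp_x(tv)$, $t\in[0,1]$, and let $q(s)$ be a curve through $y$ with $q'(0)=Z$. Using the first variation of energy along the minimizing geodesics $\exp_x(t\,\exp_x^{-1}q(s))$ (equivalently, the Gauss lemma), I would compute
\[
\left.\frac{d}{ds}\right|_{s=0}\rho^2(x,q(s))=2g_y(\dot\sigma(1),Z).
\]
Reversing the geodesic gives $\dot\sigma(1)=-\exp_y^{-1}x$, which yields $\nabla_2\rho^2(x,y)=-2(\exp_y^{-1}x)^\top$ via \eqref{dhnh} and \eqref{dual-covector}. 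Symmetry of $\rho^2$ then gives the formula for $\nabla_1\rho^2$, completing \eqref{rho2-gradient}. Identity \eqref{rho-gradient-norm} follows from \eqref{exp-distance} and \eqref{ndce1}. Taking $y=x$, where $\exp_x^{-1}x=O_x$, gives \eqref{rho2-diagonal-gradient}.

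\emph{Mixed derivative.} By \eqref{mixed-first-covector}, for fixed $Y\in T_xM$ the quantity $\nabla_2\nabla_1\rho^2(x,x)(Y,\cdot)$ is the ordinary differential at $q=x$ of the scalar function
\[
q\mapsto\nabla_1\rho^2(x,q)(Y)=-2g_x(\exp_x^{-1}q,Y).
\]
Here $g_x$ is a fixed inner product, so no connection terms appear. Since $d(\exp_x^{-1})_x=\mathrm{id}_{T_xM}$ (the inverse of $d(\exp_x)_{O_x}=\mathrm{id}$, by Lemma~\ref{cr}), its derivative in the direction $Z$ is $-2g_x(Z,Y)$. This is \eqref{rho2-diagonal-mixed}. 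The covector form \eqref{rho2-diagonal-mixed-covector} then follows from \eqref{dual-covector} and \eqref{ndce1}.
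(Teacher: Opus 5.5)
Your proposal is correct and follows essentially the paper's route. The paper takes the smoothness of $\rho^2$, \eqref{exp-distance} and \eqref{rho2-gradient} from \cite{cdz} and \cite{dzgJ}, which you replace by the standard inverse-function-theorem and first-variation (Gauss lemma) arguments, and it obtains \eqref{rho2-diagonal-mixed} exactly as you do, by differentiating $q\mapsto-2g_x(\exp_x^{-1}q,Y)$ at $q=x$ with $d(\exp_x^{-1})_x=\mathrm{id}_{T_xM}$. The one step you assert rather than prove, continuity of $(p,q)\mapsto\exp_p^{-1}q$, follows once $i$ is continuous: $E$ restricted to the open set $\{(p,w)\in TM: |w|<i(p)\}$ is an injective local diffeomorphism onto $\{(p,q):\rho(p,q)<i(p)\}$, which gives smoothness of $\exp_p^{-1}q$ and of $\rho^2(p,q)=|\exp_p^{-1}q|^2$ there directly; continuity of $i$ also supplies the bound $\rho(x,q)<i(q)$ for $q$ near $x$ that your mixed-derivative step tacitly needs.
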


Identity \eqref{rho2-diagonal-mixed} is obtained by taking \(x=y\) in
\cite[(2.20)]{cdz} and then using \cite[Lemma~2.1]{cdz} to identify the
differential of \(\exp_x^{-1}\) at \(x\) under the canonical identification.
We now use the preceding identities to relate the metric derivative, defined
in \eqref{mdac1} below, of a locally absolutely continuous curve to the norm of its
velocity vector.
\begin{lemma}\label{lem:distance-ac-curve}
Let \((M,g)\) be a connected and complete Riemannian manifold, let $c_1,c_2\in\mathbb R$ with \(c_1<c_2\), and let \(x:(c_1,c_2)\to M\) be locally absolutely
continuous. Then \(\dot x(\tau)\), defined by \eqref{pointwise-velocity}, exists
for a.e. \(\tau\in(c_1,c_2)\),
\(|\dot x(\cdot)|\in L^1_{\rm loc}(c_1,c_2)\), and
\[
    \rho(x(s),x(t))
    \leq
    \int_s^t |\dot x(\tau)|\,d\tau,\qquad c_1<s\leq t<c_2.
\]
\end{lemma}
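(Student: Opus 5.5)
The plan is to prove the three assertions in turn: almost-everywhere existence of $\dot x(\tau)$, local integrability of $|\dot x|$, and the length estimate. Everything is reduced to a metric-derivative argument that uses the squared distance function near the diagonal (Lemma~\ref{prms}).

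\textbf{Step 1: local coordinates.} Fix a compact interval $[t_1,t_2]\subset(c_1,c_2)$. By local absolute continuity, $x$ is continuous on $[t_1,t_2]$, so its image is compact. Cover the image by finitely many coordinate charts whose closures are compact and on which the metric coefficients satisfy $\lambda|\xi|^2\le g_{ij}\xi^i\xi^j\le\Lambda|\xi|^2$. Shrink the charts so that, on each one, the Riemannian distance and the Euclidean distance of the coordinates are comparable: $c\,|\varphi(p)-\varphi(q)|\le\rho(p,q)\le C|\varphi(p)-\varphi(q)|$. This is standard on small convex or normal balls. Partition $[t_1,t_2]$ into finitely many subintervals, each mapped into one chart. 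On each subinterval, $\varphi\circ x$ is absolutely continuous in $\mathbb R^n$, since $|\varphi(x(s))-\varphi(x(t))|\le c^{-1}\int_s^t m$. Hence $\varphi\circ x$ is differentiable a.e. with integrable derivative, so $\dot x(\tau)$ from \eqref{pointwise-velocity} exists for a.e.\ $\tau$. Moreover $|\dot x|\le\sqrt\Lambda\,|(\varphi\circ x)'|$, which gives $|\dot x|\in L^1_{\rm loc}$.

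\textbf{Step 2: metric derivative.} The estimate $\rho(x(s),x(t))\le\int_s^t|\dot x|$ should not be read off from the chart comparison constants, which would lose a factor. Instead I will identify the metric derivative
\[
|x'|(\tau):=\lim_{h\to0}\frac{\rho(x(\tau+h),x(\tau))}{|h|}
\]
with $|\dot x(\tau)|$ at every $\tau$ where $\dot x(\tau)$ exists. Fix such a $\tau$ and set $y=x(\tau)$. For small $h$, the point $x(\tau+h)$ lies within the injectivity radius of $y$, and by \eqref{rho-gradient-norm},
\[
\rho(y,x(\tau+h))=\tfrac12\bigl|\nabla_1\rho^2(y,x(\tau+h))\bigr|.
\]
By \eqref{rho2-diagonal-gradient}, $\nabla_1\rho^2(y,y)=0$. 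The pointwise covector chain rule \eqref{mixed-chain-rule-covector} then gives
\[
\frac{\nabla_1\rho^2(y,x(\tau+h))}{h}\;\longrightarrow\;\nabla_2\nabla_1\rho^2(y,y)(\cdot,\dot x(\tau)),
\]
and by \eqref{rho2-diagonal-mixed-covector} this limit has norm $2|\dot x(\tau)|$. Hence $|x'|(\tau)=|\dot x(\tau)|$. The one subtle point is that the chain rule has to be applied with $\Omega$ a neighborhood of the diagonal on which $\rho^2$ is smooth. Lemma~\ref{prms} supplies this, since $\rho(y,y)=0<i(y)$.

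\textbf{Step 3: integrating the metric derivative.} Show that $\rho(x(s),x(t))\le\int_s^t|x'|$. Fix $s$ and put $\phi(r)=\rho(x(s),x(r))$. By the triangle inequality, $|\phi(r)-\phi(r')|\le\rho(x(r),x(r'))\le\int_{r'}^{r}m$, so $\phi$ is absolutely continuous on $[s,t]$. At points where both $\phi$ is differentiable and $|x'|$ exists, the same triangle inequality gives $|\phi'(r)|\le|x'|(r)$. Therefore
\[
\phi(t)=\phi(t)-\phi(s)=\int_s^t\phi'\le\int_s^t|x'|=\int_s^t|\dot x|,
\]
which is the claimed estimate on every compact subinterval, and hence on all of $c_1<s\le t<c_2$.

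The step I expect to be the main obstacle is Step 2, the identification $|x'|=|\dot x|$ at points of mere pointwise differentiability. It depends on justifying the limit \eqref{mixed-chain-rule-covector} in norm and on the diagonal identities of Lemma~\ref{prms}. Should that route give trouble, the fallback is local coordinates: use $\rho(p,q)=\sqrt{g_{ij}(p)\Delta^i\Delta^j}+o(|\Delta|)$ as $q\to p$, which yields the same limit directly.
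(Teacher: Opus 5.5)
Your proof is correct. Its core is the same as the paper's. The a.e. existence of $\dot x$ comes from absolute continuity of the coordinate representation, via a local comparison between $\rho$ and chart distances. The identification $|x'|_\rho(\tau)=|\dot x(\tau)|$ is carried out exactly as in the paper: $\rho=\frac12|\nabla_1\rho^2|$, $\nabla_1\rho^2(y,y)=0$, the covector chain rule, and $|\nabla_2\nabla_1\rho^2(y,y)(\cdot,Z)|=2|Z|$.

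The difference is in how the metric-derivative facts are obtained. The paper cites \cite[Theorem~1.1.2]{MR2129498} for three facts about $|x'|_\rho$: a.e. existence, integrability, and the inequality $\rho(x(s),x(t))\le\int_s^t|x'|_\rho$. It then transfers all of these to $|\dot x|$. You get each of them directly:
\begin{itemize}
\item $|\dot x|\in L^1_{\rm loc}$ from the bound $|\dot x|\le\sqrt\Lambda\,|(\varphi\circ x)'|$;
\item existence of the metric derivative at every point where $\dot x$ exists, from your Step~2 computation, which shows that the limit actually exists there;
\item the integral inequality from the elementary argument with the absolutely continuous function $r\mapsto\rho(x(s),x(r))$.
\end{itemize}
This makes your proof self-contained at the cost of a few extra lines, whereas the paper's is shorter given the citation.

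Only one direction of your two-sided comparison is used, namely $|\varphi(p)-\varphi(q)|\le c^{-1}\rho(p,q)$, and it does need a justification. The paper uses geodesically convex balls. Alternatively, near-minimizing curves between points of a small ball stay in a slightly larger ball where the metric bounds hold.

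One detail in Step~2 needs tightening. Lemma~\ref{prms} requires $\rho(y,q)<\min\{i(y),i(q)\}$, not merely $\rho(y,q)<i(y)$. You therefore need a positive lower bound on $i(q)$ for $q$ near $y$, which the paper obtains from the uniformly normal neighbourhood lemma.
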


\begin{proof}
Fix a compact interval \([a,b]\subset(c_1,c_2)\). By Definition
\ref{jDlx}, the restriction of \(x(\cdot)\) to \([a,b]\) is absolutely continuous.
Recall that, for a curve in a metric space, the metric derivative at
\(\tau\) is defined by
\begin{align}\label{mdac1}
   |x'|_\rho(\tau)
    :=\lim_{h\to0}
    \frac{\rho(x(\tau+h),x(\tau))}{|h|} 
\end{align}
whenever this limit exists. It follows from
\cite[Theorem~1.1.2, p.~24]{MR2129498} that this metric derivative
exists for a.e. \(\tau\in(a,b)\), belongs to \(L^1(a,b)\), and satisfies
\[
    \rho(x(s),x(t))
    \leq \int_s^t |x'|_\rho(\tau)\,d\tau,
    \qquad a\leq s\leq t\leq b.
\]

We next identify the metric derivative with the norm of the velocity vector.
We use the standard notions of geodesics and geodesic segments from
\cite[pp.~103--104]{lee8}. If
\(\sigma:[\alpha,\beta]\to M\) is piecewise differentiable, its Riemannian
length is
\begin{align}\label{riemannian-curve-length}
 L_g(\sigma):=\int_\alpha^\beta|\dot\sigma(r)|\,dr;
\end{align}
see \cite[p.~34]{lee8}. A curve from \(p\) to \(q\) is \emph{minimizing} if its
length equals \(\rho(p,q)\); see \cite[p.~151]{lee8}. Recall also that a set
\(U\subset M\) is \emph{geodesically convex} if, for every \(p,q\in U\), there is a
unique minimizing geodesic segment from \(p\) to \(q\) whose image is contained
in \(U\); see \cite[p.~166]{lee8}.

The curve \(x(\cdot)\) is continuous by Definition \ref{jDlx}. Fix
\(\tau_0\in[a,b]\), and choose a relatively compact coordinate neighborhood
\(W\) of \(x(\tau_0)\). By \cite[Theorem~6.17, p.~166]{lee8}, after reducing
the radius if necessary, we may choose a geodesically convex geodesic ball
\(U\) centred at \(x(\tau_0)\) such that \(\overline U\subset W\). Let
\(\varphi:W\to\mathbb R^n\) be the corresponding coordinate map. Since
\(\overline U\) is compact and \(d\varphi\) is continuous, there is a constant
\(C>0\) such that
\[
 |d\varphi_z(v)|\leq C|v|,
 \qquad z\in U,\quad v\in T_zM.
\]
Take any distinct \(p,q\in U\). By the definition of geodesic convexity, there
is a unique minimizing geodesic segment
\(\sigma:[\alpha,\beta]\to M\) from \(p\) to \(q\), and its image is contained
in \(U\). Thus, \(\sigma(\alpha)=p\), \(\sigma(\beta)=q\), and
\(L_g(\sigma)=\rho(p,q)\). By Lemma \ref{cr},
\[
 \frac{d}{dr}\varphi(\sigma(r))
 =d\varphi_{\sigma(r)}(\dot\sigma(r)),\qquad r\in[\alpha,\beta].
\]
Therefore, the Newton--Leibniz formula in \(\mathbb R^n\) and the standard
estimate \(\left|\int_\alpha^\beta v(r)\,dr\right|
\leq\int_\alpha^\beta|v(r)|\,dr\) for continuous functions
\(v:[\alpha,\beta]\to\mathbb R^n\) yield
\begin{align*}
 |\varphi(p)-\varphi(q)|
 &=\left|\int_\alpha^\beta
       d\varphi_{\sigma(r)}(\dot\sigma(r))\,dr\right|\\
 &\leq \int_\alpha^\beta
       |d\varphi_{\sigma(r)}(\dot\sigma(r))|\,dr
 \leq C\int_\alpha^\beta|\dot\sigma(r)|\,dr
 =C L_g(\sigma)=C\rho(p,q).
\end{align*}
The same inequality is immediate when \(p=q\).
By continuity, there is an interval \(I\) containing \(\tau_0\) such
that \(x(I\cap[a,b])\subset U\). Let \(m\in L^1(a,b)\) be given by Definition
\ref{jDlx}. For \(s,t\in I\cap[a,b]\) with \(s\leq t\),
\[
    |\varphi(x(t))-\varphi(x(s))|
    \leq C\rho(x(s),x(t)) \leq C\int_s^t m(\tau)\,d\tau.
\]
Thus, the coordinate representation \(\varphi\circ x(\cdot)\) is absolutely
continuous on \(I\cap[a,b]\), and therefore it is
differentiable for a.e. \(\tau\in I\cap(a,b)\). By the coordinate representation of
velocity vectors (\ref{pointwise-velocity}), \(\dot x(\tau)\) is therefore defined for
a.e. \(\tau\in I\cap(a,b)\). Since \([a,b]\) can be covered by finitely many such
intervals, \(\dot x(\tau)\) exists for a.e. \(\tau\in(a,b)\).

Fix such a \(\tau\) at which both \(|x'|_\rho(\tau)\) and \(\dot x(\tau)\)
exist, and set \(p=x(\tau)\). By Lemma \ref{prms}, there is an open
neighbourhood \(\Omega\subset M\times M\) of \((p,p)\) such that
\(\rho^2|_\Omega\) is smooth. The slice
\(\Omega_p=\{q\in M:(p,q)\in\Omega\}\) is an open neighbourhood of \(p\).
By the uniformly normal neighborhood lemma
\cite[Lemma~6.14, pp.~163--164]{lee8}, there exist \(\eta>0\) and a
neighbourhood \(N\) of \(p\) that is contained in a geodesic ball of radius
\(\eta\) centred at each of its points. By the definition of the injectivity
radius and \cite[Corollary~6.12, p.~162]{lee8}, \(\eta\leq i(y)\) for every
\(y\in N\) and \(\rho(y,q)<\eta\) for all \(y,q\in N\). Hence, for every
\(q\in N\),
\begin{equation}\label{uniform-normal-injectivity}
 \rho(p,q)<\min\{i(p),i(q)\}.
\end{equation}
In the following calculation, the partial covariant derivatives of
\(\rho^2|_\Omega\) are denoted simply by the corresponding derivatives of
\(\rho^2\).
Since \(x(\tau+h)\to p\) as \(h\to0\), there exists \(\varepsilon>0\) such
that
\[
 x(\tau+h)\in\Omega_p\cap N,\qquad 0<|h|<\varepsilon.
\]
Therefore, \eqref{uniform-normal-injectivity} shows that the hypotheses of
Lemma \ref{prms} are satisfied by \((p,x(\tau+h))\).
By \eqref{rho-gradient-norm} and \eqref{rho2-diagonal-gradient},
\(\rho(p,x(\tau+h))=\frac12|\nabla_1\rho^2(p,x(\tau+h))|\) and
\(\nabla_1\rho^2(p,p)=0_p^*\).
Applying \eqref{mixed-chain-rule-covector} with
\(F=\rho^2|_\Omega\), \(\gamma(\cdot)=x(\cdot)\), and \(t_0=\tau\), we obtain
\begin{align}\label{rho2-along-ac-curve}
 \lim_{h\to0}
 \frac{\nabla_1\rho^2(p,x(\tau+h))-\nabla_1\rho^2(p,p)}{h}
 =
 \nabla_2\nabla_1\rho^2(p,p)(\,\cdot\,,\dot x(\tau))
\end{align}
in \(T_p^*M\). Since the norm on \(T_p^*M\) defined by
\eqref{cotangent-norm} is continuous, \eqref{rho2-along-ac-curve} and
\eqref{rho2-diagonal-mixed-covector} yield
\begin{align*}
    |x'|_\rho(\tau)
    &=\frac12\lim_{h\to0}\left|
      \frac{\nabla_1\rho^2(p,x(\tau+h))-\nabla_1\rho^2(p,p)}{h}\right|
    \\&=\frac12\left|\nabla_2\nabla_1\rho^2
      (p,p)(\,\cdot\,,\dot x(\tau))\right|\\
    &=|\dot x(\tau)|.
\end{align*}
Thus, \(|x'|_\rho(\tau)=|\dot x(\tau)|\) for a.e. \(\tau\in(a,b)\).
On the null set where \(\dot x(\tau)\) is not defined, set
\(\dot x(\tau)=O_{x(\tau)}\). Since \(|x'|_\rho\in L^1(a,b)\), the preceding
equality implies that \(|\dot x(\cdot)|\in L^1(a,b)\).
Substitution into the metric integral inequality proves the desired estimate
on \([a,b]\). Choose \(j_0\in\mathbb N\) such that
\(2/j_0<c_2-c_1\), and, for \(j\geq j_0\), set
\[
 K_j:=\left[c_1+\frac1j,c_2-\frac1j\right].
\]
Then \((K_j)_{j\geq j_0}\) is an increasing sequence of compact intervals
whose union is \((c_1,c_2)\). Applying the preceding argument to each \(K_j\)
and taking the countable union of the corresponding null sets show that
\(\dot x(\tau)\) exists for a.e. \(\tau\in(c_1,c_2)\) and that
\(|\dot x(\cdot)|\in L^1_{\rm loc}(c_1,c_2)\). Finally, every compact interval
\([s,t]\subset(c_1,c_2)\) is contained in some \(K_j\), so the desired estimate
holds for every \(c_1<s\leq t<c_2\).
\end{proof}

The following result concerns the local existence and continuation of
solutions to non-autonomous systems on Riemannian manifolds and extends the
classical Carath\'eodory results for ordinary differential equations in
Euclidean spaces; see \cite[Theorems~5.1--5.2, pp.~28--29]{MR587488}. Its
proof is based on the Euclidean version of the Carath\'eodory existence theorem
and the local coordinate representation of a Riemannian manifold.

\begin{theorem}\label{cyfx1}
Let \((M,g)\) be a connected and complete Riemannian manifold, and let \(D\subset M\) be
an open subset. Let
\[
    b:\mathbb R\times D\to TM
\]
be a time-dependent vector field, that is, \(b(t,x)\in T_xM\) for every
\((t,x)\in \mathbb R\times D\). Assume that \(b\) satisfies the following
Carathéodory-type conditions:
\begin{enumerate}
    \item for every \(x\in D\), the map \(t\mapsto b(t,x)\) is measurable;

    \item for a.e. \(t\in\mathbb R\), the map \(x\mapsto b(t,x)\) is
    continuous;

    \item for every compact set \(J\subset \mathbb R\times D\), there exists
    a function \(I_J\in L^1_{\mathrm{loc}}(\mathbb R)\) such that
    \[
        |b(t,x)|_g\leq I_J(t)
    \]
    for a.e. \(t\in\mathbb R\) and all \(x\in D\) with \((t,x)\in J\).
\end{enumerate}

Consider the initial value problem
\begin{equation}\label{dxb}
\begin{cases}
    \dot x(t)=b(t,x(t)), & \text{for a.e. } t,\\
    x(t_0)=x_0,
\end{cases}
\end{equation}
where \(t_0\in\mathbb R\) and \(x_0\in D\).

Let \(I\subset\mathbb R\) be an interval containing \(t_0\). A map
\(x:I\to D\) is called a solution of \eqref{dxb} on \(I\) if \(x(\cdot)\) is
locally absolutely continuous, \(x(t_0)=x_0\), and
\[
    \dot x(t)=b(t,x(t)),\quad \text{for a.e. } t\in I.
\]

Then the following assertions hold:
\begin{description}
    \item[\textup{(i)}] There exists \(\delta>0\) such that \eqref{dxb}
admits at least one solution on \((t_0-\delta,t_0+\delta)\).

    \item[\textup{(ii)}] Let \(c_2\in(t_0,+\infty]\), and let
    \(x:[t_0,c_2)\to D\) be a solution of \eqref{dxb}. Suppose that
    \(c_2<+\infty\) and that \(x(\cdot)\) cannot be extended beyond \(c_2\), in
    the sense that there do not exist \(c_2'>c_2\) and a solution
    \(\widetilde x:[t_0,c_2')\to D\) of \eqref{dxb} which coincides with
    \(x(\cdot)\) on \([t_0,c_2)\). Then
    \(x(\cdot)\) cannot remain in any compact subset of \(D\) as
    \(t\to c_2^{-}\). More precisely, for
    every compact set \(K\subset D\), there exists \(\tau_K\in [t_0,c_2)\)
    such that
    \[
        x(t)\notin K,
        \qquad
        \forall\, t\in(\tau_K,c_2).
    \]
    Similarly, let \(c_1\in[-\infty,t_0)\), and let
    \(x:(c_1,t_0]\to D\) be a solution of \eqref{dxb}. Suppose that
    \(c_1>-\infty\) and that \(x(\cdot)\) cannot be extended beyond \(c_1\) to
    the left, in the sense that there do not exist \(c_1'<c_1\) and a solution
    \(\widetilde x:(c_1',t_0]\to D\) of \eqref{dxb} which coincides with
    \(x(\cdot)\) on \((c_1,t_0]\). Then
    \(x(\cdot)\) cannot remain in any compact subset of \(D\) as
    \(t\to c_1^{+}\). More precisely, for every compact set \(K\subset D\),
    there exists
    \(\widetilde{\tau}_K\in(c_1,t_0]\) such that
    \[
        x(t)\notin K,
        \qquad
        \forall\, t\in(c_1,\widetilde{\tau}_K).
    \]
\end{description}
\end{theorem}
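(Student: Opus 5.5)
The plan is to reduce Theorem~\ref{cyfx1} to the classical Euclidean Carath\'eodory theorems of Hale \cite[Theorems~5.1--5.2, pp.~28--29]{MR587488} by working in local coordinates, and to use Lemma~\ref{lem:distance-ac-curve} to translate between metric absolute continuity and coordinate absolute continuity.

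\emph{Assertion (i).} Fix a coordinate chart $\varphi:U\to\mathbb R^n$ with $x_0\in U$ and $\overline U\subset D$ compact. Define the coordinate field
\[
 \widehat b(t,z):=d\varphi_{\varphi^{-1}(z)}\bigl(b(t,\varphi^{-1}(z))\bigr),\qquad (t,z)\in\mathbb R\times\varphi(U).
\]
We check the hypotheses of the Euclidean theorem for $\widehat b$. Measurability in $t$ holds because $d\varphi$ is a fixed linear map at each point. Continuity in $z$ holds for a.e.\ $t$. For local integrable boundedness, on a compact set $J'\subset\mathbb R\times\varphi(U)$ the bound $|d\varphi_q(v)|\le C|v|_g$, which is valid uniformly for $q$ in the compact set $\overline U$, gives $|\widehat b(t,z)|\le C\,I_J(t)$ with $J$ the image of $J'$. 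Hale's Theorem~5.1 then yields a Euclidean solution $z(\cdot)$ on $(t_0-\delta,t_0+\delta)$ with $z(t_0)=\varphi(x_0)$; we shrink $\delta$ so that $z$ stays in a compact subset of $\varphi(U)$. Set $x=\varphi^{-1}\circ z$. Local absolute continuity in the sense of Definition~\ref{jDlx} follows by the reverse of the Lipschitz estimate in the proof of Lemma~\ref{lem:distance-ac-curve}: $\rho(x(s),x(t))\le L_g(x|_{[s,t]})\le C'\int_s^t|z'|$, since $\varphi^{-1}$ has bounded differential on a compact set. By Lemma~\ref{lem:pointwise-chain-rule}, $\dot x(t)=d(\varphi^{-1})(z'(t))=b(t,x(t))$ for a.e.\ $t$.

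\emph{Assertion (ii).} We argue by contradiction. Suppose $c_2<\infty$, $K\subset D$ is compact, and $x(t_n)\in K$ along a sequence $t_n\to c_2^-$; the negation of the stated property gives exactly such a sequence. The aim is to show that $\lim_{t\to c_2^-}x(t)$ exists in $D$, after which one glues a local solution from (i) starting at $(c_2,x(c_2))$ to obtain an extension, contradicting maximality.

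\textbf{Step 1: the curve stays in a compact set near $c_2$.} Choose $\varepsilon>0$ so that $K_\varepsilon:=\{y:\rho(y,K)\le\varepsilon\}\subset D$; this set is compact by Hopf--Rinow. Let $I:=I_{J}$ with $J=[t_0,c_2+1]\times K_\varepsilon$, and choose $n$ so large that $\int_{t_n}^{c_2}I<\varepsilon$. We claim $x(t)\in K_\varepsilon$ for all $t\in[t_n,c_2)$. If not, let $s$ be the first exit time after $t_n$. Up to time $s$ the curve lies in $K_\varepsilon$, so Lemma~\ref{lem:distance-ac-curve} gives $\rho(x(t_n),x(s))\le\int_{t_n}^{s}|b|\le\int_{t_n}^sI<\varepsilon$, contradicting $\rho(x(s),K)=\varepsilon$.

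\textbf{Step 2: existence of the limit.} The same estimate gives $\rho(x(s),x(t))\le\int_s^tI$ for $s,t\in[t_n,c_2)$, so $x(t)$ is Cauchy as $t\to c_2^-$. By completeness the limit $x(c_2)$ exists, and it lies in $K_\varepsilon\subset D$.

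\textbf{Step 3: the extension is a solution.} The extended curve is absolutely continuous on $[t_0,c_2]$. Applying (i) at $(c_2,x(c_2))$ and concatenating yields a curve on $[t_0,c_2+\delta)$. It is locally absolutely continuous, because the defining integral inequality adds across $c_2$ by the triangle inequality, and it satisfies the equation a.e. This contradicts non-extendability. The left-endpoint statement follows symmetrically, for instance by time reversal $t\mapsto -t$ applied to $-b(-t,x)$.

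The main obstacle is Step 1. The integrable bound $I_J$ is only available on compact sets, so one must first confine the curve to $K_\varepsilon$ before the length estimate can be used; this makes the first-exit-time argument essential and nontrivial. A secondary subtlety is checking that the a.e.\ velocity of the coordinate solution coincides with the intrinsic velocity of Lemma~\ref{lem:pointwise-chain-rule}, which is handled by its chart-independence.
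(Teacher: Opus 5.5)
Your proposal is correct and follows essentially the same route as the paper. For (i) you reduce to Hale's Carath\'eodory theorem in a chart, and for (ii) you use a compact neighbourhood $K_\varepsilon\subset D$ from Hopf--Rinow, a first-exit-time argument based on Lemma~\ref{lem:distance-ac-curve}, a Cauchy limit at $c_2$, gluing with a local solution from (i), and time reversal for the left endpoint.

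The paper handles two details more carefully. First, Hale's theorem requires continuity in the state for \emph{every} $t$, so the paper first redefines $b$ as the zero vector on the exceptional null set, which changes no solutions. Second, the paper obtains the metric absolute continuity of $\varphi^{-1}\circ z$ from straight segments in a convex coordinate ball, rather than from the length of a merely absolutely continuous curve.
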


\begin{remark}\label{rem:caratheodory_continuation}
Theorem~\ref{cyfx1} gives a continuation alternative for
time-dependent tangent vector fields on an open subset of a Riemannian
manifold. If a solution cannot be extended beyond a finite endpoint, then it
eventually leaves every compact subset of the state domain. No local Lipschitz
condition with respect to the state variable,
and hence no uniqueness of solutions, is required.

Related manifold treatments address different settings. Furi and
Pera~\cite[Section~2]{MR1646607} consider \(T\)-periodic
Carath\'eodory tangent vector fields on a manifold embedded in
Euclidean space and study periodic solutions and their global
branches, but do not state the above continuation principle
for a general Cauchy problem. J\'o\'zwikowski and Respondek
\cite[Definition~11, Theorem~7, and Lemma~1]{MR3533505}
obtain unique local flows under the substantially stronger assumption
of \(C^1\)-regularity with respect to the state variable, while the
classical escape lemma for smooth autonomous vector fields can be
found in~\cite[Lemma~9.19]{le1}. Since these references do not state the
continuation result for measurable-in-time, possibly nonunique solutions in
the precise form needed here, we include a proof.
\end{remark}

\begin{proof}[Proof of Theorem \ref{cyfx1}]
We first prove assertion \textup{(i)}. The Euclidean Carath\'eo\-dory existence
theorem \cite[Theorem~5.1, p.~28]{MR587488} requires continuity with respect
to the state variable for every fixed time, whereas assumption \textup{(ii)}
provides this continuity only for a.e. time. Let \(N_0\subset\mathbb R\) be a
measurable null set outside which assumption \textup{(ii)} holds, and define
\[
 \overline b(t,x):=
 \begin{cases}
  b(t,x),&t\notin N_0,\\
  O_x,&t\in N_0.
 \end{cases}
\]
Then \(x\mapsto\overline b(t,x)\) is continuous for every \(t\), while
assumptions \textup{(i)} and \textup{(iii)} remain valid. Moreover,
\(b\) and \(\overline b\) have exactly the same solutions because the
differential equation is required to hold only a.e. It is therefore enough to
prove assertion \textup{(i)} for \(\overline b\). For notational simplicity,
throughout the remainder of the proof of assertion \textup{(i)}, the symbol
\(b\) denotes \(\overline b\).

Choose a coordinate neighborhood \(U\subset D\) of \(x_0\), with coordinate map
\(\varphi:U\to\varphi(U)\subset\mathbb R^n\), and define
\[
 \widehat b(t,z)
 :=d\varphi_{\varphi^{-1}(z)}
 b\bigl(t,\varphi^{-1}(z)\bigr),
 \qquad (t,z)\in\mathbb R\times\varphi(U).
\]
For each fixed \(z\), the map \(t\mapsto\widehat b(t,z)\) is measurable,
and, for every \(t\), the map \(z\mapsto\widehat b(t,z)\) is continuous.
For \(a\in\mathbb R^n\) and \(R>0\), write
\begin{align}\label{euclidean-coordinate-balls}
 B(a,R)&:=\{z\in\mathbb R^n:|z-a|<R\},
 &\overline B(a,R)&:=\{z\in\mathbb R^n:|z-a|\leq R\},
\end{align}
where \(|\cdot|\) is the Euclidean norm. Thus \(B(a,R)\) is the open
Euclidean ball with centre \(a\) and radius \(R\), and \(\overline B(a,R)\)
is its closure in \(\mathbb R^n\). Because \(\varphi(U)\) is open, there
exists \(R>0\) such that
\[
 \overline B(\varphi(x_0),R)\subset\varphi(U).
\]
Set
\[
 V:=\varphi^{-1}\bigl(B(\varphi(x_0),R)\bigr).
\]
Since \(\varphi\) is a homeomorphism and \(\overline B(\varphi(x_0),R)\) is
compact, \eqref{euclidean-coordinate-balls} gives
\[
 \overline V
 =\varphi^{-1}\bigl(\overline B(\varphi(x_0),R)\bigr)
 \subset U,
\]
and \(\overline V\) is compact. Moreover,
\(\varphi(V)=B(\varphi(x_0),R)\) is convex. Since \(d\varphi\) is continuous and
\(\overline V\) is compact, there exists \(C>0\) such that
\[
 |d\varphi_x(X)|\le C|X|,
 \qquad x\in\overline V,\quad X\in T_xM.
\]
For every compact interval \(L\subset\mathbb R\), assumption \textup{(iii)},
applied to the compact set \(L\times\overline V\), gives a function
\(I_{L\times\overline V}\in L^1(L)\) such that
\[
 |\widehat b(t,z)|
 \le C I_{L\times\overline V}(t)
\]
for a.e. \(t\in L\) and all \(z\in\varphi(\overline V)\). Thus
\(\widehat b\) satisfies the Carath\'eodory conditions locally near
\((t_0,\varphi(x_0))\). By this theorem, there exist
\(\delta>0\) and an absolutely continuous local solution \(z(\cdot)\)
through \(\varphi(x_0)\). By decreasing \(\delta\) if necessary, its
image is contained in \(\varphi(V)\), and it satisfies
\[
 \dot z(t)=\widehat b(t,z(t)),\qquad z(t_0)=\varphi(x_0),
\]
on \((t_0-\delta,t_0+\delta)\). Since
\(d(\varphi^{-1})\) is bounded on the compact set
\(\overline{\varphi(V)}\), there exists \(C'>0\) such that
\[
 \bigl|d(\varphi^{-1})_w(v)\bigr|\leq C'|v|,
 \qquad w\in\overline{\varphi(V)},\quad v\in\mathbb R^n.
\]
For \(z_1,z_2\in\varphi(V)\), convexity ensures that the line segment
\(\eta(r)=(1-r)z_1+rz_2\), \(0\leq r\leq1\), is contained in
\(\varphi(V)\). By the definition of \(\rho\),
\eqref{riemannian-curve-length}, and Lemma \ref{cr},
\[
 \rho\bigl(\varphi^{-1}(z_1),\varphi^{-1}(z_2)\bigr)
 \leq L_g(\varphi^{-1}\circ\eta)
 \leq C'|z_1-z_2|.
\]
Thus \(\varphi^{-1}\) is Lipschitz from \(\varphi(V)\), endowed with the
Euclidean distance, to \(M\), endowed with \(\rho\). Define
\[
 x(\cdot):=\varphi^{-1}\circ z(\cdot).
\]
Since \(z(\cdot)\) is absolutely continuous, for every compact subinterval
\([a,b]\subset(t_0-\delta,t_0+\delta)\) and \(a\leq s\leq t\leq b\),
\[
 \rho(x(s),x(t))
 \leq C'|z(s)-z(t)|
 \leq C'\int_s^t|\dot z(r)|\,dr.
\]
It follows from Definition \ref{jDlx} that \(x(\cdot)\) is locally absolutely
continuous. At a.e. time at which \(z(\cdot)\) is differentiable,
\eqref{pointwise-velocity}, the definition of \(\widehat b\), and Lemma
\ref{cr} give
\[
 \dot x(t)
 =d(\varphi^{-1})_{z(t)}\dot z(t)
 =d(\varphi^{-1})_{z(t)}\widehat b(t,z(t))
 =d(\varphi^{-1})_{z(t)}d\varphi_{x(t)}b(t,x(t))
 =b(t,x(t))
\]
for a.e. \(t\in(t_0-\delta,t_0+\delta)\). Therefore, \(x(\cdot)\) is a
solution of \eqref{dxb} on this interval.

We now return to the original vector field \(b\) and prove assertion
\textup{(ii)}. Let \(x:[t_0,c_2)\to D\) be a solution of \eqref{dxb}. We first
suppose that \(c_2<+\infty\) and that \(x(\cdot)\) cannot be extended beyond
\(c_2\), and prove the statement for the right endpoint.

Assume, by contradiction, that the conclusion fails.
Then there exists a compact set \(K\subset D\) such that, for every
\(\tau\in[t_0,c_2)\), one can find \(t_\tau\in(\tau,c_2)\) satisfying
\(x(t_\tau)\in K\). In particular, for every sufficiently large
\(j\in\mathbb N\), there exists
\[
    t_j\in \left(c_2-\frac1j,c_2\right)
\]
such that
\begin{equation}\label{xtjk}
    x(t_j)\in K .
\end{equation}
We divide the argument into four steps.

\medskip

\noindent\textit{Step 1.}\,We construct a compact neighborhood of \(K\)
contained in \(D\).
For a subset \(A\subset M\), define
\begin{equation}\label{day}
    d_A(y):=\inf_{z\in A}\rho(y,z),
    \qquad y\in M.
\end{equation}

If \(D=M\), choose any \(r>0\). If \(D\neq M\), then
\(M\setminus D\neq\emptyset\). Since \(D\) is open, \(M\setminus D\) is
closed. For \(y\in M\), define
\[
    d_{M\setminus D}(y)
    :=
    \inf_{\eta\in M\setminus D}\rho(y,\eta).
\]
By \cite[Lemma~6.29, p.~175]{lee8}, the function
\(d_{M\setminus D}\) is continuous on \(M\). Moreover,
\[
    y\in D
    \quad\Longleftrightarrow\quad
    d_{M\setminus D}(y)>0 .
\]
Since \(K\subset D\) is compact, we have
\[
    \delta
    :=
    \inf_{y\in K} d_{M\setminus D}(y)
    >0 .
\]
In this case, choose \(r\in(0,\delta)\). In either case, set
\begin{equation}\label{dkr}
    K_r:=\{y\in M: d_K(y)\le r\}.
\end{equation}
We claim that
\begin{equation}\label{krd}
    K_r\subset D .
\end{equation}
This is immediate if \(D=M\). Suppose that \(D\neq M\), and let
\(z\in K_r\). Since \(K\) is compact, there exists
\(z_0\in K\) such that
\[
    d_K(z)=\rho(z,z_0)\le r .
\]
For any \(\eta\in M\setminus D\), the triangle inequality for \(\rho\)
gives
\[
    \rho(z,\eta)
    \ge \rho(z_0,\eta)-\rho(z,z_0).
\]
Taking the infimum over \(\eta\in M\setminus D\), we obtain
\[
    d_{M\setminus D}(z)
    \ge d_{M\setminus D}(z_0)-\rho(z,z_0)
    \ge \delta-r>0 .
\]
Thus \(z\in D\), and therefore \eqref{krd} follows.

\medskip

\noindent\textit{Step 2.}
Since \(M\) is complete, the Hopf--Rinow theorem
\cite[Theorem~2.8, p.~146]{c} implies that closed and bounded subsets of
\(M\) are compact. Since \(d_K\) is continuous, \(K_r=\{y\in M:d_K(y)\le r\}\) is closed.
Moreover, because \(K\) is compact, \(K_r\) is bounded. Hence, by the
Hopf--Rinow theorem, \(K_r\) is compact.

Consequently,
\[
    J_r:=[t_0,c_2]\times K_r
\]
is a compact subset of \(\mathbb R\times D\). By the assumed local
integrable boundedness of \(b\), there exists \(I_{J_r}\in L^1(t_0,c_2)\) such
that
\begin{align}\label{bijt}
    |b(t,y)|\le I_{J_r}(t),\quad a.e.\, t\in[t_0,c_2],\quad \forall\,y\in K_r.
\end{align}
Since \(I_{J_r}\in L^1(t_0,c_2)\), we may choose \(j\) sufficiently large such
that \(t_j\in(t_0,c_2)\) and
\begin{equation}\label{smalltail}
    \int_{t_j}^{c_2} I_{J_r}(s)\,ds<r .
\end{equation}

\medskip

\noindent\textit{Step 3.}
We claim that
\begin{equation}\label{staykr}
    x(t)\in K_r,
    \qquad t_j\le t<c_2 .
\end{equation}
Suppose otherwise. Since \(x(t_j)\in K\) by \eqref{xtjk}, we have
\(d_K(x(t_j))=0\), and the set $\{t\in(t_j,c_2): d_K(x(t))>r\}$ is nonempty. Thus, we set
\[
    \sigma
    :=
    \inf\{t\in(t_j,c_2): d_K(x(t))>r\}
\]
By continuity of \(x(\cdot)\) and \(d_K(\cdot)\), we have
\(\sigma\in(t_j,c_2)\) and
\[
    d_K(x(t))\le r,\qquad t\in[t_j,\sigma],
    \qquad
    d_K(x(\sigma))=r .
\]
In particular, \(x(t)\in K_r\) for all \(t\in[t_j,\sigma]\). Hence, using
the definition of \(d_K(\cdot)\), \eqref{bijt}, and Lemma
\ref{lem:distance-ac-curve}, we obtain
\[
\begin{aligned}
    r
    =d_K(x(\sigma))
    &\le \rho(x(t_j),x(\sigma))                                      \\
    &\le \int_{t_j}^{\sigma} |\dot x(s)|\,ds                         \\
    &= \int_{t_j}^{\sigma} |b(s,x(s))|\,ds                            \\
    &\le \int_{t_j}^{\sigma} I_{J_r}(s)\,ds                                \\
    &< r,
\end{aligned}
\]
where the last inequality follows from \eqref{smalltail}. This is a
contradiction. Therefore \eqref{staykr} holds.

Set
\[
 K^*:=x([t_0,t_j])\cup K_r,
 \qquad J:=[t_0,c_2]\times K^*.
\]
Since \(x(\cdot)\) is continuous, \(x([t_0,t_j])\) is a compact subset of \(D\).
Thus \(K^*\) is compact, and so is \(J\). Assumption \textup{(iii)} gives a
function \(I_J\in L^1(t_0,c_2)\) such that
\begin{equation}\label{bijt-full-trajectory}
 |b(t,y)|\le I_J(t),
 \quad\text{for a.e. }t\in[t_0,c_2]\text{ and every }y\in K^*.
\end{equation}
By \eqref{staykr}, \(x(t)\in K^*\) for every \(t\in[t_0,c_2)\).

\medskip

\noindent\textit{Step 4. Extension beyond \(c_2\).}
Let \(s,t\in[t_j,c_2)\) with \(s<t\). Since \(x(\ell)\in K_r\) for
\(\ell\in[t_j,c_2)\), Lemma \ref{lem:distance-ac-curve}, the differential
equation, and \eqref{bijt-full-trajectory} give
\[
\begin{aligned}
    \rho(x(s),x(t))
    \le \int_s^t |\dot x(\ell)|\,d\ell
    = \int_s^t |b(\ell,x(\ell))|\,d\ell\le \int_s^t I_J(\ell)\,d\ell .
\end{aligned}
\]
Since \(I_J\in L^1(t_0,c_2)\), for every \(\varepsilon>0\) there exists
\(\delta>0\) such that
$
    \int_s^t I_J(\ell)\,d\ell<\varepsilon$,
whenever \(c_2-\delta<s\le t<c_2\). Hence
\[
    \rho(x(s),x(t))
    \leq
    \int_s^t I_J(\ell)\,d\ell
    <\varepsilon
\]
for all \(c_2-\delta<s\le t<c_2\). Thus \(x(\cdot)\) is Cauchy as
\(t\to c_2^{-}\). Since \(x([t_j,c_2))\subset K_r\) and \(K_r\) is compact, there exists
\(x_{c_2}\in K_r\) such that
\[
    \lim_{t\to c_2^-}x(t)=x_{c_2}.
\]

Applying the local existence result in assertion \textup{(i)} with initial
condition \(y(c_2)=x_{c_2}\), we obtain \(\delta_{c_2}>0\) and a solution
\(y:(c_2-\delta_{c_2},c_2+\delta_{c_2})\to D\) of
\[
\begin{cases}
    \dot y(t)=b(t,y(t)),
    & \text{for a.e. }t\in(c_2-\delta_{c_2},c_2+\delta_{c_2}),\\
    y(c_2)=x_{c_2}.
\end{cases}
\]
Define
\[
    \widetilde x(t)
    :=
    \begin{cases}
        x(t), & t_0\leq t<c_2,\\
        y(t), & c_2\le t<c_2+\delta_{c_2}.
    \end{cases}
\]

It remains to show that the pasted curve \(\widetilde x(\cdot)\) is locally
absolutely continuous on \([t_0,c_2+\delta_{c_2})\). Let
\([a,b]\subset [t_0,c_2+\delta_{c_2})\) be arbitrary. If \(b<c_2\), then
\(\widetilde x(\cdot)=x(\cdot)\) on \([a,b]\), and hence
\(\widetilde x(\cdot)\) is absolutely continuous on \([a,b]\). If
\(a\ge c_2\), then \(\widetilde x(\cdot)=y(\cdot)\) on
\([a,b]\), and again \(\widetilde x(\cdot)\) is absolutely continuous on
\([a,b]\), because \([a,b]\) is a compact subinterval of the interval of
existence of \(y(\cdot)\).

If \(b=c_2\), choose \(b'\in(c_2,c_2+\delta_{c_2})\). Once the result is
proved on \([a,b']\), it follows on \([a,b]\) by restriction. It therefore
remains to consider the case \(a<c_2<b\). Since \(x(t)\to x_{c_2}\) as
\(t\to c_2^{-}\), we extend \(x(\cdot)\) to \([a,c_2]\) by setting
\[
    x(c_2):=x_{c_2}.
\]
Recall that \(I_J\in L^1(t_0,c_2)\). Since \(a\geq t_0\), we have
\(I_J\in L^1(a,c_2)\). By
\eqref{bijt-full-trajectory} and the differential equation, we have
\(|\dot x(t)|\le I_J(t)\) for a.e. \(t\in[a,c_2)\).
For \(a\le s\le t<c_2\) with \(s>t_0\), Lemma
\ref{lem:distance-ac-curve} and the preceding bound give
\[
    \rho(x(s),x(t))
    \le
    \int_s^t I_J(\tau)\,d\tau .
\]
If \(s=t_0\), the same inequality follows by letting \(s\to t_0^+\), using
the continuity of \(x(\cdot)\) and the integrability of \(I_J\).
Letting \(t\to c_2^{-}\), we obtain
\[
    \rho(x(s),x_{c_2})
    \le
    \int_s^{c_2} I_J(\tau)\,d\tau,
    \qquad a\le s<c_2.
\]
Therefore, by Definition \ref{jDlx}, the extended curve \(x(\cdot)\) is
absolutely continuous on \([a,c_2]\). On the other hand, assertion
\textup{(i)} gives a locally absolutely continuous solution \(y(\cdot)\) on
\((c_2-\delta_{c_2},c_2+\delta_{c_2})\). Since \([c_2,b]\) is a compact
subinterval of this interval, the restriction of \(y(\cdot)\) to \([c_2,b]\) is
absolutely continuous. Hence there exists a function
\(m_{y,c_2,b}\in L^1(c_2,b)\) such that
\[
    \rho(y(s),y(t))
    \le
    \int_s^t m_{y,c_2,b}(\tau)\,d\tau,
    \qquad c_2\le s\le t\le b.
\]
Define
\[
    m(\tau)
    =
    \begin{cases}
        I_J(\tau), & a\le \tau<c_2,\\
        0, & \tau=c_2,\\
        m_{y,c_2,b}(\tau), & c_2<\tau\le b.
    \end{cases}
\]
Then \(m\in L^1(a,b)\). We claim that
\[
    \rho(\widetilde x(s),\widetilde x(t))
    \le
    \int_s^t m(\tau)\,d\tau,
    \qquad a\le s\le t\le b.
\]
Indeed, if \(s,t\) lie on the same side of \(c_2\), this follows from the
absolute continuity of the corresponding piece. If \(s<c_2<t\), then,
using \(x_{c_2}=y(c_2)\), we have
\[
\begin{aligned}
    \rho(\widetilde x(s),\widetilde x(t))
    &=
    \rho(x(s),y(t))                                      \\
    &\le
    \rho(x(s),x_{c_2})+\rho(y(c_2),y(t))                  \\
    &\le
    \int_s^{c_2} I_J(\tau)\,d\tau
    +
    \int_{c_2}^{t} m_{y,c_2,b}(\tau)\,d\tau              \\
    &=
    \int_s^t m(\tau)\,d\tau .
\end{aligned}
\]
Thus \(\widetilde x(\cdot)\) is absolutely continuous on \([a,b]\).

Since the compact interval \([a,b]\subset[t_0,c_2+\delta_{c_2})\) was
arbitrary, \(\widetilde x(\cdot)\) is locally absolutely continuous on
\([t_0,c_2+\delta_{c_2})\).
Moreover, \(\widetilde x(t_0)=x_0\), and
\[
 \dot{\widetilde x}(t)=b(t,\widetilde x(t))
\]
for a.e. \(t\in(t_0,c_2+\delta_{c_2})\), because this equality holds on
each side of \(c_2\) and the single time \(c_2\) is negligible. Thus
\(\widetilde x(\cdot)\) is a continuation of \(x(\cdot)\) beyond \(c_2\), which
contradicts the assumption that \(x(\cdot)\) cannot be extended beyond
\(c_2\). Therefore, \(x(\cdot)\) must leave every compact subset of \(D\) as
\(t\to c_2^-\).

We finally prove the statement for the left endpoint. Let
\(x:(c_1,t_0]\to D\) be a solution of \eqref{dxb}. Suppose that
\(c_1>-\infty\) and that \(x(\cdot)\) cannot be extended beyond \(c_1\) to the
left. Define
$
 z(s):=x(-s)$ for $ s\in[-t_0,-c_1)$,
and
$
 \widetilde b(s,q):=-b(-s,q)$ for $ (s,q)\in\mathbb R\times D$.
The change of variables \(t=-s\) shows that \(\widetilde b\) satisfies the same
Carath\'eodory-type assumptions as \(b\). In particular, for a compact subset
of \(\mathbb R\times D\), condition \textup{(iii)} for \(\widetilde b\) follows
by applying the corresponding condition for \(b\) to the compact set obtained
by reflecting its time component; local integrability is preserved under this
reflection. Moreover, \(z:[-t_0,-c_1)\to D\) is locally absolutely continuous,
\(z(-t_0)=x_0\), and
\[
 \dot z(s)=-\dot x(-s)=-b(-s,z(s))=\widetilde b(s,z(s))
\]
for a.e. \(s\in(-t_0,-c_1)\). Thus, \(z(\cdot)\) is a solution of the reflected
system. If it could be extended beyond its right endpoint \(-c_1\), reversing
time would give an extension of \(x(\cdot)\) beyond \(c_1\) to the left. Hence,
the right-endpoint argument applied to \(z(\cdot)\) shows that \(z(\cdot)\)
leaves every compact subset of \(D\) as \(s\to(-c_1)^-\). Equivalently,
\(x(\cdot)\) leaves every compact subset of \(D\) as \(t\to c_1^+\).
The proof is complete.
\end{proof}
 
  \bibliographystyle{amsplain_no_bysame}
\bibliography{ISS_refs}
\providecommand{\bysame}{\leavevmode\hbox to3em{\hrulefill}\thinspace}
\providecommand{\MR}{\relax\ifhmode\unskip\space\fi MR }
\providecommand{\MRhref}[2]{%
  \href{http://www.ams.org/mathscinet-getitem?mr=#1}{#2}
}
\providecommand{\href}[2]{#2}

\end{document}